\documentclass{biometrika}

\usepackage{amsmath,amssymb,mathtools,bm}
\usepackage{microtype}
\usepackage{booktabs}
\usepackage[dvipsnames]{xcolor}
\usepackage[colorlinks=true,citecolor=MidnightBlue,linkcolor=MidnightBlue,urlcolor=MidnightBlue]{hyperref}
\usepackage[nameinlink,noabbrev]{cleveref}
\hypersetup{
  pdftitle={Sharp proper estimation of fixed-component Gaussian location mixtures in polynomial time},
  pdfauthor={Hengzhi He and Guang Cheng},
  pdfsubject={Proper density estimation for high-dimensional Gaussian mixtures},
  pdfkeywords={Gaussian mixture, Hellinger distance, Hermite tensor, moment method, proper learning}
}

\newcommand{\R}{\mathbb R}
\newcommand{\E}{\mathbb E}
\newcommand{\Pp}{\mathbb P}
\newcommand{\op}{\mathrm{op}}
\newcommand{\tr}{\mathrm{tr}}

\newcommand{\Sym}{\operatorname{Sym}}
\newcommand{\Span}{\operatorname{span}}
\newcommand{\cH}{\mathcal H}
\newcommand{\cG}{\mathcal G}

\newcommand{\cW}{\mathcal W}
\newcommand{\cC}{\mathcal C}
\newcommand{\eps}{\varepsilon}
\newcommand{\ip}[2]{\langle #1,#2\rangle}
\newcommand{\norm}[1]{\lVert #1\rVert}
\newcommand{\fnorm}[1]{\lVert #1\rVert_{\mathrm F}}

\newcommand{\contr}{\mathbin{\lrcorner}}

\numberwithin{equation}{section}

\makeatletter
\def\ps@plain{\ps@empty}
\let\ps@biom\ps@empty
\let\ps@biometrika\ps@empty
\makeatother
\begin{document}

\jname{}
\jyear{}
\copyrightinfo{}

\title{Sharp proper estimation of fixed-component Gaussian location mixtures in polynomial time}
\author{Hengzhi He and Guang Cheng}
\affil{Department of Statistics and Data Science, University of California, Los Angeles\
\email{hengzhihe@g.ucla.edu; guangcheng@stat.ucla.edu}}

\markboth{H. He and G. Cheng}{Sharp proper Gaussian mixture estimation}
\maketitle

\begin{abstract}
We consider a mixture of at most $k$ unit-covariance Gaussians in $\R^d$ whose
means belong to a fixed-radius ball, with no separation or minimum-weight
condition.  \citet{DWYZ23} proved that the minimax Hellinger risk is
of order $\sqrt{d/n}\wedge1$ and constructed a proper polynomial-time estimator
with the slower general bound $(d/n)^{1/4}$; obtaining the sharp rate in
polynomial time for fixed $k\ge3$ was left open.  We resolve this question.
The key device is a moment-fiber range finder.  A second-moment subspace controls
the energy missed by projection.  We then estimate finitely many one-free-index
Hermite contractions.  These vector-valued contractions recover every tensor
component containing exactly one missed direction at the sharp $\sqrt{d/n}$
scale.  Every term that remains contains at least two missed factors and is
therefore controlled by residual second-moment energy.  The resulting subspace
has dimension depending only on $k$.  Exhaustive moment fitting in this
constant-dimensional space produces a proper mixture and, together with the
dimension-free moment characterization of Gaussian mixtures, achieves the
optimal Hellinger rate in polynomial arithmetic time for every fixed $k$.
\end{abstract}

\begin{keywords}
computational--statistical gap; Gaussian location mixture; Hellinger distance;
Hermite tensor; moment method; proper learning
\end{keywords}

\section{Model and main result}

Let
\begin{equation}\label{eq:model}
  X=U+Z,\qquad Z\sim N(0,I_d),\qquad
  U\sim\Gamma=\sum_{j=1}^{s}w_j\delta_{\mu_j},
\end{equation}
where $s\le k$, $w_j\ge0$, $\sum_jw_j=1$, and $\norm{\mu_j}\le R$.
Write $P_\Gamma=\Gamma*N(0,I_d)$ and use the convention
\[
 H^2(P,Q)=\int(\sqrt p-\sqrt q)^2,
\]
so $0\le H(P,Q)\le\sqrt2$.  Let $B_R^d$ denote the closed Euclidean ball
of radius $R$.

\newpage
\begin{theorem}[Sharp proper polynomial-time estimation]\label{thm:main}
Fix $k\in\mathbb N$ and $R<\infty$.  There is a sample-splitting estimator
$\widehat\Gamma^{\mathrm E}$, supported on at most $k$ points of $B_R^d$, such
that, uniformly over all mixing distributions $\Gamma$ as in
\eqref{eq:model},
\begin{equation}\label{eq:main-expectation}
 \E_\Gamma H(P_{\widehat\Gamma^{\mathrm E}},P_\Gamma)
 \le C_{k,R}\min\left\{1,\sqrt{\frac dn}\right\}.
\end{equation}
Moreover, for every $\delta\in(0,1/3)$ there is a confidence-dependent
estimator $\widehat\Gamma^{\delta}$, also supported on at most $k$ points of
$B_R^d$, such that
\begin{equation}\label{eq:main-tail}
 \Pp_\Gamma\left\{
 H(P_{\widehat\Gamma^{\delta}},P_\Gamma)>
 C_{k,R}\min\left\{1,\sqrt{\frac{d+\log(1/\delta)}{n}}\right\}
 \right\}\le\delta.
\end{equation}
For every fixed $(k,R)$, both estimators are computable in arithmetic time
polynomial in $n,d$; the tail estimator is also polynomial in
$\log(1/\delta)$.
\end{theorem}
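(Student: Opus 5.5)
The argument has three stages: find a subspace $V$ of dimension $m=m(k)$ that almost contains the means, fit moments exhaustively inside $V$, and convert moment accuracy into Hellinger accuracy. If $d\ge n$ the claim is trivial, since $H\le\sqrt2$ and any point mass in $B_R^d$ works, so assume $d<n$ and split the sample into three parts.

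\textbf{Stage 1: second-moment subspace.} From the first part, form $\widehat M_2=\frac1n\sum X_iX_i^\top-I_d$, an unbiased estimator of $M_2=\sum_j w_j\mu_j\mu_j^\top$. Since $\norm{\mu_j}\le R$, the vector $X$ is sub-Gaussian, and standard covariance concentration gives $\norm{\widehat M_2-M_2}_{\op}\lesssim\sqrt{(d+\log(1/\delta))/n}=:\eps$. Let $V_0$ be the span of the top $k$ eigenvectors of $\widehat M_2$, and let $P^\perp$ be the projection onto its complement. Because $M_2$ has rank at most $k$, Weyl and Davis--Kahan-free arguments give a bound on the residual energy:
\[
 \sum_j w_j\norm{P^\perp\mu_j}^2=\tr(P^\perp M_2P^\perp)\lesssim k\eps .
\]
This controls the missed mass only at scale $\sqrt\eps$ in norm. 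Moment matching would then give a Hellinger error of order $\sqrt\eps=(d/n)^{1/4}$, which is exactly the \citet{DWYZ23} bottleneck.

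\textbf{Stage 2: range finder from Hermite contractions.} I would enlarge $V_0$. For each order $\ell\le 2k-1$ and each tuple of unit vectors $u_1,\dots,u_{\ell-1}$ from a fixed finite net of $V_0$ (finite because $\dim V_0\le k$), estimate the vector
\[
 T_\ell(u_1,\dots,u_{\ell-1})=\E\,\cH_\ell(X)[u_1,\dots,u_{\ell-1},\cdot\,]=\sum_j w_j\textstyle\prod_i\ip{\mu_j}{u_i}\,\mu_j\in\R^d .
\]
Each such vector is a $d$-dimensional sub-Gaussian-type mean, so its estimation error in Euclidean norm is $\sqrt{(d+\log)/n}$; a median-of-means version gives the $\log(1/\delta)$ tail. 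Let $V$ be the span of $V_0$ together with the top singular directions of the matrix whose columns are these estimates, truncated at rank $O_k(1)$.

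The key claim is that every component of the tensor $\sum_j w_j\mu_j^{\otimes\ell}$ with exactly one factor outside $V$ is recovered up to error $\eps$. Components with two or more missed factors are then bounded by Cauchy--Schwarz against $\sum_j w_j\norm{P_V^\perp\mu_j}^2\lesssim\eps$. Consequently, for every $\ell\le 2k-1$,
\[
 \fnorm{\textstyle\sum_j w_j\mu_j^{\otimes\ell}-\sum_j w_j(P_V\mu_j)^{\otimes\ell}}\lesssim\eps .
\]
This is the main obstacle. The contractions are taken along directions of the estimated $V_0$ rather than the true range, and the columns must capture all one-missed-index contributions simultaneously. I would first try to show that, modulo $O(\eps)$ errors, the span of the true vectors $T_\ell(u_1,\dots,u_{\ell-1})$ over the net equals the span of $\{\sum_j w_jp(\mu_j)\mu_j : p \text{ a polynomial of degree }<2k \text{ on } V_0\}$, which has dimension $O(k)$. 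I would then apply a Wedin-type perturbation bound with a threshold gap chosen among $O_k(1)$ candidate levels. Pigeonholing over these levels is what makes the lost factor a constant.

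\textbf{Stage 3: fitting and conversion.} On the third sample, project onto $V$, a space of dimension $m=O_k(1)$. Compute the empirical moment tensors of $P_VX$ up to order $2k-1$; their errors are $O_k(\sqrt{\log(1/\delta)/n})$. Then grid-search over $k$-atom mixtures in $B_R\cap V$ with weights on a grid of mesh $n^{-1/2}$. This takes $n^{O_k(1)}$ time, and each candidate is checked in time $\mathrm{poly}(d)$ when embedded. Output the minimiser of the moment discrepancy. The candidate $\widehat\Gamma$ and the projected truth $P_V\Gamma$ then agree in their first $2k-1$ moments up to $O(\eps)$. Combined with Stage 2, the same holds between $\widehat\Gamma$ and $\Gamma$ in $\R^d$.

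I would then invoke the dimension-free moment comparison from \citet{DWYZ23}: for $k$-atom mixing measures in $B_R^d$,
\[
 H(P_\Gamma,P_{\Gamma'})\lesssim_{k,R}\max_{\ell\le 2k-1}\fnorm{M_\ell(\Gamma)-M_\ell(\Gamma')} .
\]
This gives \eqref{eq:main-tail}. Integrating the tail bound over $\delta$, or running the construction with $\delta=\exp(-d)$, and using $H\le\sqrt2$ gives \eqref{eq:main-expectation}.
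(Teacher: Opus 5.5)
Your overall architecture matches the paper's: a coarse second-moment range, one-free-index Hermite contractions along it, a net fit in fixed dimension, and the moment comparison of \citet{DWYZ23}. The gap is that the step you call the main obstacle is never proved. The route you sketch for it (approximate equality of spans, a Wedin bound with a pigeonholed gap, truncation) targets two-sided subspace recovery. That is unnecessary, and it would still not explain why the tensor components you need are small. The missing idea is a three-way split together with one-sided containment.

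Put $a=P_{V_0}U$, $b=(P_V-P_{V_0})U$ and $c=(I-P_V)U$. In $(a+b+c)^{\otimes\ell}-(a+b)^{\otimes\ell}$, the only term with fewer than two factors in $V_0^\perp$ is $\ell\Sym(c\otimes a^{\otimes(\ell-1)})$. Every other term is pointwise at most $R^{\ell-2}\norm{(I-P_{V_0})U}^2$, so your Stage~1 residual already controls it. This includes $c\otimes b\otimes a^{\otimes(\ell-2)}$. Your bookkeeping (one factor outside $V$ versus two) counts that term as ``recovered'', yet contractions along $V_0$ never see it.

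For the surviving term, let $E_{s,j}$ be an orthonormal basis of $\Sym^s(V_0)$ and $g_{s,j}=M_{s+1}(\Gamma)\contr E_{s,j}$. Then $\E[c\,\ip{a^{\otimes s}}{E_{s,j}}]=(I-P_V)g_{s,j}$ exactly. Parseval then gives $\fnorm{\E[c\otimes a^{\otimes s}]}^2=\sum_j\norm{(I-P_V)g_{s,j}}^2\le\sum_j\norm{g_{s,j}-\widehat g_{s,j}}^2=:e_s^2$, \emph{provided every $\widehat g_{s,j}$ lies in $V$}. This is why the paper keeps the full, untruncated span. Its dimension is at most $k+\binom{3k-2}{2k-2}$, already a constant, so no gap or perturbation theory enters. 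Contracting along the estimated $V_0$ rather than the true range is exactly what this argument uses, so it is not an obstacle. If you truncate anyway, you must carry the distances of the discarded estimates to $V$. If you use rank-one net tuples instead of an orthonormal basis, you also need a frame bound on $V_0^{\otimes s}$.

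Two probabilistic steps also fail as written.

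\textbf{Fiber estimation in the tail bound.} The fibers are degree-$\ell$ Gaussian polynomials, not sub-Gaussian vectors, and their empirical means have only sub-Weibull tails. Coordinatewise or geometric median-of-means gives roughly $\sqrt{d\log(1/\delta)/N}$, not the required $\sqrt{(d+\log(1/\delta))/N}$. You need the chaos-isometry bounds $\norm{\operatorname{Cov}}_{\op}\le C_{\ell,R}$ and $\tr\operatorname{Cov}\le C_{\ell,R}d$. These must be combined with a polynomial-time finite-covariance estimator attaining the additive rate, as in \cref{prop:robust-mean}. The block-3 moments likewise need median-of-means rather than plain empirical means.

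\textbf{The expectation bound.} \eqref{eq:main-expectation} does not follow from your construction. Your estimator depends on $\delta$, so you cannot integrate its tail over $\delta$. Any single choice must satisfy $\delta\lesssim\sqrt{d/n}$ for the failure event to be negligible. Then $\log(1/\delta)\ge\frac12\log(n/d)$ adds a term $\sqrt{\log(n/d)/n}$, which is not $O(\sqrt{d/n})$ for small $d$. Your choice $\delta=e^{-d}$ even leaves an additive $e^{-d}$ that does not decay in $n$.

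The paper instead analyzes a separate confidence-free estimator built from plain empirical means. The fiber bound, the net-fit bound and the Hellinger upper bound of \cref{thm:dwyz} are all deterministic and linear in the errors. It therefore suffices that $\E e_s\le(\E e_s^2)^{1/2}\lesssim\sqrt{d/N}$, which follows from variances alone. It also needs $\E\norm{\widehat M_2-M_2}_{\op}\lesssim\sqrt{d/N}$, obtained by integrating the covariance tail of the $\delta$-free estimator $\widehat M_2$.
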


Throughout the paper, ``polynomial time'' means polynomially many exact real
arithmetic operations, as in the computational analysis of \citet{DWYZ23}.
We do not claim a Turing bit-complexity theorem; finite-precision issues and
the additional ingredients required for such a theorem are discussed in
\cref{sec:complexity}.

The estimator is proper: its output is itself a mixture of at most $k$
Gaussians with covariance $I_d$, not merely a density approximation or a
sampling oracle.  No eigengap, separation, or lower bound on the weights is
used.  The polynomial degree depends on $k$; the result does not claim
polynomial complexity when $k$ is part of the input.

For $k=2$, \citet{DWYZ23} already gave a different efficient estimator attaining
the sharp rate.  The construction here covers every fixed $k$, but it is a
complexity-theoretic rather than a practical algorithm: for $k=3$, the generic
proper-net search used below can require order $n^{58}$ candidates in the
worst case.  No attempt is made to optimize this exponent.

The expected-risk and tail estimators use the same range-finding and proper
fitting construction.  The former uses ordinary sample means and only their
second-moment bounds; the latter substitutes finite-covariance robust means.
This distinction is necessary because a robust mean estimator achieving
sub-Gaussian deviations under only finite covariance is generally allowed to
depend on the requested confidence level.

\section{Preliminaries}

\subsection{Symmetric tensors and latent moments}

For a random vector $V\in\R^d$, define
\[
 M_\ell(V)=\E V^{\otimes\ell},\qquad \ell\ge1.
\]
For a mixing distribution we also write $M_\ell(\Gamma)$.  Tensor spaces carry
the Frobenius inner product and norm.  The operator $\Sym$ is normalized
symmetrization, hence the orthogonal projection onto the symmetric tensor
subspace and therefore a contraction in Frobenius norm.  If $T$ is an
$\ell$-tensor and $q$ is an $r$-tensor with $r<\ell$, $T\contr q$ denotes
contraction of the last $r$ modes.

We use the following dimension-free result, which is Theorem~4.2 of
\citet{DWYZ23}.

\begin{theorem}[Moment characterization; Doss--Wu--Yang--Zhou]\label{thm:dwyz}
For two mixing distributions $\Gamma,\Gamma'$ supported on at most $k$ points
of $B_R^d$ and every $D\in\{H^2,\mathrm{KL},\chi^2\}$,
\begin{align}\label{eq:dwyz-both}
 &(C_Rk)^{-4k}
 \max_{1\le\ell\le2k-1}
 \fnorm{M_\ell(\Gamma)-M_\ell(\Gamma')}^2
 \le D(P_\Gamma,P_{\Gamma'}) \notag\\
 &\hspace{45mm}\le C_R e^{36k^2}
 \max_{1\le\ell\le2k-1}
 \fnorm{M_\ell(\Gamma)-M_\ell(\Gamma')}^2.
\end{align}
The constant does not depend on $d$.
\end{theorem}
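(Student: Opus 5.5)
Although \cref{thm:dwyz} is quoted from \citet{DWYZ23}, here is how I would prove it directly. The first step is to reduce to constant dimension. Let $V$ be the span of the at most $2k$ atoms of $\Gamma$ and $\Gamma'$, so $m=\dim V\le 2k$. Writing $x=(x_V,x_{V^\perp})$, both $P_\Gamma$ and $P_{\Gamma'}$ factor as a mixture on $V$ times the same $N(0,I_{V^\perp})$. Every $f$-divergence, in particular $H^2$, KL and $\chi^2$, therefore equals the corresponding divergence of the projected mixtures on $V\cong\R^m$. Since all atoms lie in $V$, the moment tensors $M_\ell(\Gamma)$ and $M_\ell(\Gamma')$ live in $V^{\otimes\ell}$, and their Frobenius norms are unchanged by the identification. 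Hence it suffices to prove \eqref{eq:dwyz-both} with $d\le 2k$, and any dependence on $d$ then becomes dependence on $k$.

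Because $H^2\le \mathrm{KL}\le\chi^2$, I only need a lower bound for $H^2$ and an upper bound for $\chi^2$.

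\emph{Upper bound.} Write $p_\Gamma(x)/\varphi(x)=\sum_{\ell\ge0}\ip{M_\ell(\Gamma)}{\mathrm{He}_\ell(x)}/\ell!$, where $\mathrm{He}_\ell$ are the Hermite tensors. Bound the denominator below by $p_{\Gamma'}(x)\ge\varphi(x)e^{-R\norm x-R^2/2}$, which gives
\[
\chi^2(P_\Gamma,P_{\Gamma'})\le e^{R^2/2}\int (p_\Gamma-p_{\Gamma'})^2 e^{R\norm x}\varphi^{-1}.
\]
I would control the weight $e^{R\norm x}$ either by Cauchy--Schwarz against a slightly wider Gaussian, or by passing to the Hermite expansion at a rescaled variance. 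This yields a bound of the form $\sum_\ell c_R^\ell\fnorm{M_\ell(\Gamma)-M_\ell(\Gamma')}^2/\ell!$. The remaining task is to bound the high-order moment differences ($\ell\ge 2k$) by the low-order ones. For this I would project onto directions $u$: the difference $\Gamma_u-\Gamma'_u$ of the one-dimensional projected measures is a signed measure on at most $2k$ atoms in $[-R,R]$. Its moments of order $\ell\ge 2k$ are linear combinations of the first $2k-1$ moments, with coefficients bounded by $(CR)^{\ell}$ via Newton/Vandermonde (characteristic-polynomial) recursions. Since $m\le 2k$, the symmetric Frobenius norm is equivalent to $\max_u|\ip{T}{u^{\otimes\ell}}|$ up to a factor $C_k^\ell$, computed over a net of directions. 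Summing the series with $1/\ell!$ damping then gives the factor $C_Re^{36k^2}$.

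\emph{Lower bound.} Use the variational inequality $|\int f(p-q)|\le H(P,Q)\,\bigl(2\int f^2(p+q)\bigr)^{1/2}$. Take $f(x)=\ip{T}{\mathrm{He}_\ell(x)}$ with $T=M_\ell(\Gamma)-M_\ell(\Gamma')$ and $\ell\le 2k-1$. Since $\E\,\mathrm{He}_\ell(U+Z)=U^{\otimes\ell}$, the numerator equals $\fnorm{T}^2$. For the denominator, $\int f^2 p\le C_R^{\ell}\ell!\fnorm T^2$ by Gaussian hypercontractivity, using $\norm U\le R$. This gives $H^2\ge \fnorm T^2/(C_R^\ell\ell!)\ge (C_Rk)^{-4k}\fnorm T^2$ for $\ell\le 2k-1$. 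This part is routine.

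\emph{Main obstacle.} I expect the hard step to be the dimension-free control of $\fnorm{M_\ell(\Gamma)-M_\ell(\Gamma')}$ for $\ell\ge 2k$ by the first $2k-1$ moment differences. The one-dimensional recursion is clean, but lifting it to tensors requires the reduction to $m\le 2k$ together with a polarization/net argument whose constants must stay of order $e^{O(k^2)}$. I would first try to lift the recursion directly at the tensor level. The $2k$-atom signed measure is annihilated by a degree-$2k$ polynomial in each direction, which should let me express $T_\ell$ through $T_1,\dots,T_{2k-1}$ contracted against tensors bounded by $(CR)^{\ell}$. Only if this fails would I fall back to the net over directions.
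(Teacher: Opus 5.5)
The paper gives no proof of this statement. It imports it verbatim as Theorem~4.2 of \citet{DWYZ23} and uses only its conclusions: the Hellinger upper bound for \cref{thm:main}, and both sides plus the $\chi^2$ bound in \cref{prop:quadratic-lb}. Your sketch is a correct self-contained replacement. The reduction to the span of the at most $2k$ atoms is sound, and so is the chain $H^2\le\mathrm{KL}\le\chi^2$; it holds for the paper's un-halved $H^2$ because $\mathrm{KL}\ge-2\log(1-H^2/2)$. Your lower bound does not even need the reduction. The translation identity \eqref{eq:chaos-isometry} gives $\int f^2p_\Gamma\le\ell!\,(1+R^2)^\ell\fnorm{T}^2$, so the variational inequality yields $H^2\ge\fnorm{T}^2/\{4\,\ell!\,(1+R^2)^\ell\}$. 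For $\ell\le2k-1$ this is a constant of order $(C_Rk)^{-2k}$, better than the quoted $(C_Rk)^{-4k}$.

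For the upper bound, what you call the fallback is already a complete proof, and the tensor-level recursion is unnecessary. Three points make it rigorous.

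\emph{One-dimensional recursion.} Write the remainder of $y^\ell$ modulo $\prod_{i\le2k}(y-y_i)$ in Newton form. Its divided differences are the complete homogeneous symmetric polynomials $h_{\ell-i}(y_1,\ldots,y_{i+1})$, each bounded by $\binom{\ell}{i}R^{\ell-i}$. Together with $m_0(\nu_u)=0$, this gives $|m_\ell(\nu_u)|\le(1+2R)^\ell\max_{1\le j\le2k-1}|m_j(\nu_u)|$, uniformly over possibly coincident nodes in $[-R,R]$. This is the step that needs care. Vandermonde inversion is not uniform in the node spacing. Naively iterating the characteristic recursion only gives growth $(1+R)^{2k\ell}$, and summing that against $1/\ell!$ is doubly exponential in $k$.

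\emph{Lifting to tensors.} In dimension $m$ one has $\fnorm{T}\le m^{(\ell-1)/2}\sup_{\norm{u_i}=1}|T(u_1,\ldots,u_\ell)|$. For symmetric $T$, Banach's theorem identifies this supremum with $\sup_{\norm u=1}|\ip{T}{u^{\otimes\ell}}|$. Since the one-dimensional bound holds for every $u$, no net is needed.

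\emph{The weight $e^{R\norm x}$.} Control it by Cauchy--Schwarz, $L^4$ hypercontractivity (a factor $3^{\ell/2}$ per chaos), and $\E e^{2R\norm Z}\le e^{2R\sqrt{2k}+2R^2}$ in $\R^{2k}$.

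Altogether $\chi^2\le C_Re^{c(1+R)^2k}\max_{j\le2k-1}\fnorm{M_j(\Gamma)-M_j(\Gamma')}^2$. The inequality $c(1+R)^2k\le36k^2+c^2(1+R)^4/144$ then puts this in the stated form $C_Re^{36k^2}$. The citation supplies the specific constants at no cost. Your route makes the paper self-contained and gives better $k$-dependence on both sides, though for the fixed-$k$ application any $C_{k,R}$ would do.
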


In the convention of \citet{DWYZ23}, a $k$-atomic distribution is written
with $k$ weights that are allowed to vanish.  Thus \cref{thm:dwyz} applies to
two distributions with possibly different support sizes, provided each has
at most $k$ atoms.  In \cref{prop:quadratic-lb} below we apply it with the
theorem parameter set to $k=6$.

Only the Hellinger upper bound in \eqref{eq:dwyz-both} is needed for the main
result; the quadratic-sketch proposition later uses both sides and the
chi-squared upper bound.

\subsection{Hermite tensors}

Define the multivariate probabilists' Hermite tensors by the generating
function
\begin{equation}\label{eq:hermite-gen}
 \exp\{\ip{t}{x}-\norm{t}^2/2\}
 =\sum_{\ell=0}^\infty\frac1{\ell!}
   \ip{\cH_\ell(x)}{t^{\otimes\ell}}.
\end{equation}
If $Z\sim N(0,I_d)$, then
\begin{equation}\label{eq:hermite-unbiased}
 \E[\cH_\ell(u+Z)]=u^{\otimes\ell},\qquad
 \E\cH_\ell(X)=M_\ell(\Gamma).
\end{equation}
For $q\in\Sym^{\ell-1}(\R^d)$, define the one-free-index statistic
\begin{equation}\label{eq:fiber-stat}
 Y_{\ell,q}(x)=\cH_\ell(x)\contr q\in\R^d.
\end{equation}
Thus $\E Y_{\ell,q}(X)=M_\ell(\Gamma)\contr q$.

\subsection{A finite-covariance mean primitive}

We use a standard computational primitive.

\begin{proposition}[Polynomial-time sub-Gaussian mean estimation]\label{prop:robust-mean}
Let $Y_1,\ldots,Y_N$ be iid vectors in $\R^p$ with mean $g$ and finite
covariance $\Sigma$.  There is a polynomial-arithmetic-time estimator
$\widehat g$ such that, whenever $N\ge c\log(1/\eta)$,
\begin{equation}\label{eq:robust-mean}
 \norm{\widehat g-g}
 \le C\left(
  \sqrt{\frac{\tr\Sigma}{N}}+
  \sqrt{\frac{\norm{\Sigma}_{\op}\log(1/\eta)}{N}}
 \right)
\end{equation}
with probability at least $1-\eta$.
\end{proposition}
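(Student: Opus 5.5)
We will prove this with the standard median-of-means construction in its polynomial-time form, due to Hopkins and to Cherapanamjeri--Flammarion--Bartlett (or the simpler descent procedure of Lei--Luh--Venkat--Zhang, or Depersin--Lecu\'e's spectral method). We treat it as a black-box primitive, so only the reduction and the two error terms need to be described.

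First, partition the sample into $K\asymp\log(1/\eta)$ blocks of size $m=\lfloor N/K\rfloor$; this needs $N\ge c\log(1/\eta)$. Form the block means $\bar Y_1,\dots,\bar Y_K$. Each block mean has covariance $\Sigma/m$. By Chebyshev, $\norm{\bar Y_b-g}\le 2\sqrt{\tr\Sigma/m}$ with probability at least $3/4$. Hoeffding's inequality then shows that, with probability $1-\eta/2$, at least $0.6K$ blocks are good in this scalar sense.

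Scalar goodness is not enough to reach the dimension-free term $\sqrt{\norm{\Sigma}_{\op}\log(1/\eta)/N}$. The key event is instead uniform over directions:
\[
\sup_{\norm{v}\le1}\#\bigl\{b:\ \ip{\bar Y_b-g}{v}> r\bigr\}\le 0.1K,
\qquad r\asymp\sqrt{\tr\Sigma/N}+\sqrt{\norm{\Sigma}_{\op}\log(1/\eta)/N}.
\]
We prove this in three steps. Replace the indicator by a Lipschitz surrogate. Symmetrize and apply the Ledoux--Talagrand contraction to bound the expected supremum by $\E\norm{\sum_b\epsilon_b(\bar Y_b-g)}/(Kr)$, which is small by the trace bound. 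Finally, use the bounded differences inequality in the blocks to obtain the $\exp(-cK)$ deviation. This gives the median-of-means tournament guarantee.

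Second, we make the estimator polynomial time. The combinatorial tournament estimator (a point $\widehat g$ whose projections lie near the median projection in every direction) is not efficiently computable. We therefore replace it by the semidefinite relaxation of Hopkins, or Cherapanamjeri et al.'s gradient-style iteration. The iteration keeps a current guess $x$. At each step it solves an SDP over $\{(v,b)\}$ certifying, for the direction $v$, that a $0.9$ fraction of the $\ip{\bar Y_b-x}{v}$ are large, and it moves $x$ along the extracted $v$. The event above guarantees that whenever $\norm{x-g}\gg r$ the SDP value is large and a near-aligned direction can be rounded out of it. Each step then shrinks the distance by a constant factor. Conversely, once $\norm{x-g}\lesssim r$ the procedure certifies termination. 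Starting from the coordinatewise median, which is within $\sqrt{K}\cdot\mathrm{poly}$ of $g$, polynomially many steps of SDPs of size $\mathrm{poly}(K,p)$ suffice. This holds in exact real arithmetic, which is the convention of the paper.

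The main obstacle is the analysis of the SDP relaxation. We must show that the degree-2 sum-of-squares relaxation of "many block means lie far in a common direction" cannot be fooled when the uniform event holds. This amounts to bounding the SDP value by $O(\sqrt{\tr\Sigma/m}+\sqrt{\norm{\Sigma}_{\op}})$ times $K$. It follows from Grothendieck-type bounds on $\E\sup_{M\succeq0,\tr M=1}\sum_b\ip{M}{(\bar Y_b-g)(\bar Y_b-g)^\top}$-like quantities together with the same bounded-differences step. Since this is exactly the content of the cited works, in the paper we state the primitive as a known result and rely on their proofs.
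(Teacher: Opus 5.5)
Your proposal takes the same route as the paper, which also proves this only by citing Theorem~1.2 of Hopkins (with the Cherapanamjeri--Flammarion--Bartlett implementation as a faster alternative). The paper checks just two things: that this theorem's confidence range $\eta>2^{-N/C}$ is implied by $N\ge c\log(1/\eta)$ for large $c$, as your $K\asymp\log(1/\eta)$ blocking reflects, and that its arithmetic cost is $O(Np)+(p\log(1/\eta))^{C_0}$.

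Your explanatory sketch is not load-bearing, but two points in it are off. First, the expectation step also needs the fixed-direction Chebyshev term $\norm{\Sigma}_{\op}/(mr^2)$, which is where the $\sqrt{\norm{\Sigma}_{\op}\log(1/\eta)/N}$ part of $r$ comes from. Second, the SDP value must be bounded by order $Kr\asymp\sqrt{K\tr\Sigma/m}+K\sqrt{\norm{\Sigma}_{\op}/m}$. Your $K(\sqrt{\tr\Sigma/m}+\sqrt{\norm{\Sigma}_{\op}})$ overstates the trace term by a factor $\sqrt K$, which is exactly the gap between the naive $\sqrt{\tr\Sigma\log(1/\eta)/N}$ rate and the sub-Gaussian one, and it drops the $1/\sqrt m$ in the operator term.
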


The proposition follows directly from Theorem~1.2 of \citet{Hop20}.  In that
result the confidence range is $\eta>2^{-N/C_2}$ for a universal $C_2$;
choosing the constant $c$ above sufficiently large makes
$N\ge c\log(1/\eta)$ imply this condition.  Its arithmetic runtime is
$O(Np)+(p\log(1/\eta))^{C_0}$ for a universal $C_0$, and neither $\Sigma$ nor
any moment beyond the covariance is required as input.  Theorem~1 of
\citet{CFB19} supplies a related faster implementation in the same confidence
regime.  Their procedure first compresses the $N$ observations into
$B=O(\log(1/\eta))$ block means and then applies an
$O(B^{3.5}+B^2p)$ optimization routine to those block means.  Including the
$O(Np)$ cost of forming them gives
$\widetilde O\{Np+p\log^2(1/\eta)+\log^{3.5}(1/\eta)\}$ arithmetic operations,
with an arbitrary numerical tolerance; tolerance $N^{-2}$ is absorbed in our
application.  When
$\log(1/\eta)$ is of order $N$ or larger, the main estimator instead uses its
trivial-output branch.

\section{The estimators}\label{sec:estimator}

Put $N=\lfloor n/3\rfloor$, split $3N$ observations into three independent
blocks $I_1,I_2,I_3$ of exactly $N$ observations each, and discard the at most
two remaining observations.  Set
\begin{equation}\label{eq:L-eps}
 L=2k-1,\qquad
 \eps=\sqrt{\frac{d+\log(1/\delta)}{n}}.
\end{equation}

We first describe the confidence-dependent estimator
$\widehat\Gamma^\delta$.
If $N<C_{k,R}^{(0)}(d+\log(1/\delta))$, for a sufficiently large fixed
constant $C_{k,R}^{(0)}$, return the one-atom mixing distribution
$\delta_0$.  Since then $\eps$ is bounded below by a constant, this case is
absorbed by \eqref{eq:main-tail}.  We henceforth describe the nontrivial case.
The threshold constant is chosen large enough that $\eps<1$ in this branch,
so the mesh $\tau_\delta=\min\{1,\eps\}$ below equals $\eps$.

\subsection{Block 1: A coarse range}
Form the unbiased latent second-moment estimator
\begin{equation}\label{eq:m2hat}
 \widetilde M_2=\frac1N\sum_{i\in I_1}(X_iX_i^\top-I_d).
\end{equation}
Let $A$ be a top-$q$ eigenspace of $\widetilde M_2$, where
$q=\min\{k,d\}$, and write $P_A$ for its projector.

\subsection{Block 2: Moment-fiber range augmentation}
For each $s=0,\ldots,L-1$, take a Frobenius-orthonormal basis
$\{E_{s,j}:1\le j\le D_s\}$ of $\Sym^s(A)$, where
\begin{equation}\label{eq:Ds}
 D_s=\binom{q+s-1}{s},\qquad D_0=1.
\end{equation}
Using block $I_2$ and \cref{prop:robust-mean}, estimate all vectors
\begin{equation}\label{eq:true-fibers}
 g_{s,j}=M_{s+1}(\Gamma)\contr E_{s,j}
 =\E\big[\cH_{s+1}(X)\contr E_{s,j}\big].
\end{equation}
Call the estimates $\widehat g_{s,j}$ and define
\begin{equation}\label{eq:H-def}
 H=\Span\left(A,\{\widehat g_{s,j}:0\le s\le L-1,
              1\le j\le D_s\}\right).
\end{equation}
The dimension is bounded by
\begin{equation}\label{eq:mk}
 m:=\dim H\le m_k:=k+\sum_{s=0}^{2k-2}\binom{k+s-1}{s}
 =k+\binom{3k-2}{2k-2}.
\end{equation}

\subsection{Block 3: Proper fitting in fixed dimension}
Let $B\in\R^{d\times m}$ have orthonormal columns spanning $H$.  From
$B^\top X_i$, $i\in I_3$, estimate
\begin{equation}\label{eq:low-moments}
 T_\ell=M_\ell(B^\top\#\Gamma)
 =\E\cH_\ell(B^\top X),\qquad 1\le\ell\le L,
\end{equation}
coordinatewise by scalar median-of-means; denote the estimates by
$\widetilde T_\ell$.

For the tail estimator, let
$\tau=\tau_\delta:=\min\{1,\eps\}$.  Construct a Euclidean $\tau$-net $\cC_\tau$ of
$B_R^m$ and an $\ell_1$-$\tau$-net $\cW_\tau$ of the simplex
$\Delta_{k-1}$.  Enumerate
\begin{equation}\label{eq:grid}
 \cG_\tau=
 \left\{\sum_{i=1}^k\bar w_i\delta_{\bar\theta_i}:
  (\bar w_1,\ldots,\bar w_k)\in\cW_\tau,
  \bar\theta_i\in\cC_\tau\right\},
\end{equation}
and select any minimizer
\begin{equation}\label{eq:minfit}
\widehat\gamma\in\arg\min_{\gamma\in\cG_\tau}
 \max_{1\le\ell\le L}\fnorm{M_\ell(\gamma)-\widetilde T_\ell}.
\end{equation}
For the statistical guarantee an exact minimizer is unnecessary: any
candidate whose displayed objective is within $c_{k,R}\tau$ of the minimum
obeys the same rate after changing constants; see \cref{lem:net-fit}.
Finally lift its atoms through $B$:
\begin{equation}\label{eq:lift}
 \widehat\Gamma=B\#\widehat\gamma.
\end{equation}
Because $B$ is an isometry from $\R^m$ to $H$, every lifted atom remains in
$B_R^d$.  Thus the output, which we denote by $\widehat\Gamma^\delta$, is
proper.

\subsection{Expected-risk version}
The expected-risk estimator $\widehat\Gamma^{\mathrm E}$ is defined without a
confidence-level input.  It uses the same three equal blocks as above.  If
$N<C_{k,R}^{(0)}d$, it returns $\delta_0$.  Otherwise, block 1 and the coarse
space $A$ are unchanged.  On block 2, each fiber $g_{s,j}$ is estimated by the
ordinary empirical mean of
$\cH_{s+1}(X)\contr E_{s,j}$, and $H$ is the span in \eqref{eq:H-def}.
On block 3, each $T_\ell$ is estimated by the ordinary empirical mean of
$\cH_\ell(B^\top X)$.  The proper fit \eqref{eq:minfit} is then performed on
the mesh
\[
 \tau_{\mathrm E}=\min\{1,\sqrt{d/n}\},
\]
and its atoms are lifted through $B$ exactly as in \eqref{eq:lift}.  The tail
estimator differs only in using the small-sample threshold
$C_{k,R}^{(0)}(d+\log(1/\delta))$, finite-covariance robust means for the
block-2 fibers, coordinate median-of-means for the block-3 moments, and the
mesh $\tau_\delta=\min\{1,\sqrt{(d+\log(1/\delta))/n}\}$.

All ambiguous choices of eigenspaces, orthonormal bases, and net minimizers
for either estimator are resolved by fixed Borel-measurable tie-breaking
rules, for example lexicographic Gram--Schmidt relative to the standard basis.

\section{The deterministic fiber-sketch lemma}

This section contains the main structural step.  It does not require $U$ to be
atomic.

\begin{lemma}[Fiber sketches control projection bias]\label{lem:fiber}
Let $U$ be any random vector supported in $B_R^d$, and let $A\subseteq H$ be
linear subspaces with projectors $P_A,P_H$.  For $s\ge0$, let
$\{E_{s,j}\}_{j=1}^{D_s}$ be a Frobenius-orthonormal basis of $\Sym^s(A)$,
and put
\[
 g_{s,j}=M_{s+1}(U)\contr E_{s,j}.
\]
Suppose vectors $\widehat g_{s,j}\in H$ are given.  Define
\begin{equation}\label{eq:fiber-errors}
 e_s^2=\sum_{j=1}^{D_s}\norm{\widehat g_{s,j}-g_{s,j}}^2,
 \qquad
 \rho=\E\norm{(I-P_A)U}^2.
\end{equation}
Then
\begin{equation}\label{eq:first-moment-proj}
 \norm{M_1(U)-M_1(P_HU)}\le e_0,
\end{equation}
and, for every $\ell\ge2$,
\begin{equation}\label{eq:fiber-bound}
 \fnorm{M_\ell(U)-M_\ell(P_HU)}
 \le \ell e_{\ell-1}+3^\ell R^{\ell-2}\rho.
\end{equation}
\end{lemma}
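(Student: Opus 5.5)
The plan is to split $U$ according to the two subspaces and expand the tensor power. Write $Q=P_H$, $Q^\perp=I-P_H$, $a=P_AU$ and $b=(I-P_A)U$, so that $U=a+b$ and $\E\norm b^2=\rho$. Because $A\subseteq H$ we have $Q^\perp a=0$, hence $Q^\perp U=Q^\perp b$. Also $\norm a,\norm{QU},\norm{Qb}\le R$ and $\norm{Q^\perp b}\le\norm b$.

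\textbf{First moment.} Take $E_{0,1}=1$, so that $g_{0,1}=M_1(U)$. Then
\[
M_1(U)-M_1(QU)=Q^\perp g_{0,1}=Q^\perp(g_{0,1}-\widehat g_{0,1}),
\]
since $\widehat g_{0,1}\in H$. Its norm is therefore at most $e_0$, which gives \eqref{eq:first-moment-proj}.

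\textbf{Higher moments: expansion.} For $\ell\ge2$, expand $U^{\otimes\ell}=(QU+Q^\perp b)^{\otimes\ell}$ and subtract $(QU)^{\otimes\ell}$. The remaining terms fall into two groups.
\begin{itemize}
\item[(i)] Terms with at least two factors $Q^\perp b$. Each such term has Frobenius norm at most $\norm{QU}^{\ell-j}\norm b^{j}\le R^{\ell-2}\norm b^2$ when $j\ge2$ factors are missed. Since there are fewer than $2^\ell$ such terms, their total expectation is at most $2^\ell R^{\ell-2}\rho$.
\item[(ii)] The $\ell$ terms with exactly one factor $Q^\perp b=Q^\perp U$, placed in some position, with $(QU)^{\otimes(\ell-1)}$ in the other positions.
\end{itemize}

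In each term of group (ii), expand further using $QU=a+Qb$. Every piece except $a^{\otimes(\ell-1)}\otimes Q^\perp U$ contains at least one $Qb$ and the $Q^\perp b$, so it is bounded by $R^{\ell-2}\norm b^2$. This contributes at most $\ell(2^{\ell-1}-1)R^{\ell-2}\rho$ in total.

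\textbf{Key identification (the main step).} It remains to control the Frobenius norm of $\E[a^{\otimes(\ell-1)}\otimes Q^\perp U]$ in each of the $\ell$ positions. Mode permutations preserve the Frobenius norm, so it suffices to treat the last position. The tensor $\E[a^{\otimes(\ell-1)}\otimes U]$ is $M_\ell(U)$ with its first $\ell-1$ modes projected onto $A$. It lies in $\Sym^{\ell-1}(A)\otimes\R^d$, because $a^{\otimes(\ell-1)}$ is symmetric and supported on $A$.

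Expanding in the orthonormal basis $\{E_{\ell-1,j}\}$, its coefficient vectors are
\[
\ip{E_{\ell-1,j}}{\cdot}=\E[\ip{E_{\ell-1,j}}{a^{\otimes(\ell-1)}}\,U]=M_\ell(U)\contr E_{\ell-1,j}=g_{\ell-1,j}.
\]
Here we use the symmetry of $M_\ell(U)$ and the fact that $E_{\ell-1,j}$ lives on $A$, so that contracting with $U$ or with $a$ agrees. Hence
\[
\E[a^{\otimes(\ell-1)}\otimes Q^\perp U]=\sum_j E_{\ell-1,j}\otimes Q^\perp g_{\ell-1,j}=\sum_j E_{\ell-1,j}\otimes Q^\perp(g_{\ell-1,j}-\widehat g_{\ell-1,j}),
\]
using $\widehat g_{\ell-1,j}\in H$. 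By orthonormality of the $E_{\ell-1,j}$, its Frobenius norm is $\bigl(\sum_j\norm{Q^\perp(g_{\ell-1,j}-\widehat g_{\ell-1,j})}^2\bigr)^{1/2}\le e_{\ell-1}$.

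Summing over the $\ell$ positions gives the term $\ell e_{\ell-1}$. This identification step is the one requiring care: one must check that the basis expansion of the $A$-projected moment tensor really produces the fibers $g_{\ell-1,j}$.

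\textbf{Collecting constants.} The bias terms total at most
\[
\bigl(2^\ell+\ell 2^{\ell-1}\bigr)R^{\ell-2}\rho\le 3^\ell R^{\ell-2}\rho,
\]
where the last inequality is an elementary check by induction, true at $\ell=2$ since $8\le9$. This gives \eqref{eq:fiber-bound}.
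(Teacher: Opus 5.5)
Your proof is correct and is essentially the paper's argument. Your pieces $a=P_AU$, $Qb=(P_H-P_A)U$, $Q^\perp b=(I-P_H)U$ are exactly the paper's orthogonal decomposition $a,b,c$, and your identification $\E[a^{\otimes(\ell-1)}\otimes Q^\perp U]=\sum_j E_{\ell-1,j}\otimes Q^\perp(g_{\ell-1,j}-\widehat g_{\ell-1,j})$ is the paper's Parseval bound $\fnorm{F_{\ell-1}}\le e_{\ell-1}$, with every remaining term bounded by $R^{\ell-2}\norm{b}^2$ in both proofs. The differences are only bookkeeping: you use a two-stage binary expansion summed over the $\ell$ positions, giving the count $2^\ell+\ell2^{\ell-1}\le3^\ell$, whereas the paper uses a multinomial expansion together with contractivity of $\Sym$.
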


\begin{proof}
Write the orthogonal decomposition
\[
 a=P_AU,\qquad b=(P_H-P_A)U,\qquad c=(I-P_H)U,
 \qquad r=b+c=(I-P_A)U.
\]
For $s\ge0$, define
\[
 F_s=\E[c\otimes a^{\otimes s}]
 \in H^\perp\otimes\Sym^s(A).
\]
Since $E_{s,j}$ lies entirely in $A^{\otimes s}$,
\[
 F_s\contr E_{s,j}=(I-P_H)g_{s,j}.
\]
Parseval in the last $s$ modes and the fact that
$\widehat g_{s,j}\in H$ give
\begin{equation}\label{eq:parseval-fiber}
 \fnorm{F_s}^2
 =\sum_{j=1}^{D_s}\norm{(I-P_H)g_{s,j}}^2
 \le\sum_{j=1}^{D_s}\norm{g_{s,j}-\widehat g_{s,j}}^2=e_s^2.
\end{equation}

Now expand and symmetrize
\begin{align}\label{eq:tensor-expansion}
 &(a+b+c)^{\otimes\ell}-(a+b)^{\otimes\ell}\notag\\
 &\quad=
 \sum_{\substack{p+q+t=\ell\\t\ge1}}
 \binom{\ell}{p,q,t}
 \Sym\big(a^{\otimes p}\otimes b^{\otimes q}
                    \otimes c^{\otimes t}\big).
\end{align}
The unique summand with exactly one factor outside $A$ is
$\ell\Sym(c\otimes a^{\otimes(\ell-1)})$.  After expectation, its norm is
at most $\ell\fnorm{F_{\ell-1}}\le\ell e_{\ell-1}$ by
\eqref{eq:parseval-fiber} and the contractivity of $\Sym$.

Every other summand in \eqref{eq:tensor-expansion} has $q+t\ge2$.  Pointwise,
using orthogonality and $\norm{U}\le R$,
\[
 \norm{a}^{p}\norm{b}^{q}\norm{c}^{t}
 \le R^{\ell-2}\norm{r}^2.
\]
The absolute multinomial coefficients sum to at most $3^\ell$.  Taking
expectations proves \eqref{eq:fiber-bound}.  The case $\ell=1$ is
\eqref{eq:parseval-fiber} with $s=0$.
\end{proof}

If $a_1,\ldots,a_q$ is an orthonormal basis of $A$, a normalized monomial
basis used in \cref{lem:fiber} is
\begin{equation}\label{eq:sym-basis}
 E_\alpha=\sqrt{\frac{s!}{\alpha!}}
 \Sym(a_1^{\otimes\alpha_1}\otimes\cdots\otimes
      a_q^{\otimes\alpha_q}),\qquad |\alpha|=s.
\end{equation}
The factor in \eqref{eq:sym-basis} is needed for the exact Parseval identity.

\section{Statistical control of the range}

\subsection{Residual second-moment energy}

Let $M_2=\E UU^\top$.  It is positive semidefinite and has rank at most $k$.

\begin{lemma}[Gap-free coarse range]\label{lem:coarse}
Put $\zeta=\norm{\widetilde M_2-M_2}_{\op}$.  For the space $A$ in block 1,
\begin{equation}\label{eq:rho-kyfan}
 \rho:=\E\norm{(I-P_A)U}^2
 =\tr((I-P_A)M_2)\le2k\zeta.
\end{equation}
Moreover, in the nontrivial regime of \cref{thm:main}, with probability at
least $1-\delta/4$,
\begin{equation}\label{eq:zeta-rate}
 \zeta\le C_{k,R}\eps.
\end{equation}
\end{lemma}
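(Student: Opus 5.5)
The lemma has two parts. The first is a deterministic, gap-free perturbation bound for the top-$q$ eigenspace. The second is a concentration bound for $\widetilde M_2$ in operator norm.

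For the deterministic part, I first note that $\E\norm{(I-P_A)U}^2=\tr((I-P_A)\E UU^\top)=\tr((I-P_A)M_2)$. If $q=d$ then $P_A=I$ and there is nothing to prove, so assume $q=k$. Write $\widetilde M_2=M_2+E$ with $\norm{E}_{\op}=\zeta$. Then
\[
\tr((I-P_A)M_2)=\tr M_2-\tr(P_AM_2)=\tr M_2-\tr(P_A\widetilde M_2)+\tr(P_AE).
\]
Since $A$ is a top-$k$ eigenspace of $\widetilde M_2$, Ky Fan's maximum principle gives
\[
\tr(P_A\widetilde M_2)\ge\tr(P_V\widetilde M_2)=\tr M_2+\tr(P_VE),
\]
where $V$ is any $k$-dimensional subspace containing the range of $M_2$; such a $V$ exists because $M_2$ has rank at most $k$. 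Combining the two displays,
\[
\tr((I-P_A)M_2)\le\tr(P_AE)-\tr(P_VE)\le k\zeta+k\zeta=2k\zeta,
\]
because each projector has rank at most $k$. This argument uses no eigengap, which is the point of the lemma.

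For the probabilistic part, I write $X_iX_i^\top-I_d-M_2=(U_iU_i^\top-M_2)+(U_iZ_i^\top+Z_iU_i^\top)+(Z_iZ_i^\top-I_d)$ and bound the three averaged terms separately.

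\emph{First term.} It is a bounded matrix, with $\norm{U_iU_i^\top}_{\op}\le R^2$. Matrix Bernstein, or even the Frobenius-norm Hoeffding inequality in Hilbert space, gives a deviation of order $R^2\sqrt{\log(1/\delta)/N}$.

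\emph{Second term.} Conditional on the $U_i$, the average $\frac1N\sum_i Z_iU_i^\top$ has operator norm equal to the supremum of $\frac1N\sum_i\ip{Z_i}{x}\ip{U_i}{y}$ over unit vectors $x,y$. Because all $U_i$ lie in a space of dimension at most $k$, the effective $y$-dimension is at most $k$. A Gaussian process bound, or an $\varepsilon$-net over $S^{d-1}\times S^{k-1}$ with Gaussian tails, then gives a deviation of order $R\sqrt{(d+\log(1/\delta))/N}$.

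\emph{Third term.} The standard Wishart bound, for instance Vershynin's covariance estimation theorem, gives
\[
\Big\|\frac1N\sum_i Z_iZ_i^\top-I\Big\|_{\op}\le C\Big(\sqrt{\tfrac{d+\log(1/\delta)}{N}}+\tfrac{d+\log(1/\delta)}{N}\Big).
\]

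I take each term's failure probability to be $\delta/12$. In the nontrivial regime $N\ge C^{(0)}_{k,R}(d+\log(1/\delta))$, so the quadratic term is dominated by the square-root term. Since $N\asymp n$, all three bounds are at most $C_{k,R}\eps$, which gives \eqref{eq:zeta-rate}.

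I expect the main obstacle to be this concentration step. The difficulty is keeping the dependence on $d+\log(1/\delta)$ rather than $d\log d$, and handling the unbounded cross and Wishart terms sharply. I would rely on the standard net argument for Gaussian matrices rather than matrix Bernstein, because matrix Bernstein would introduce a $\log d$ factor. The Ky Fan step is routine once the rank-$k$ comparison subspace $V$ is chosen.
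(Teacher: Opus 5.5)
Your proposal is correct. For \eqref{eq:rho-kyfan} it follows the paper's argument; for \eqref{eq:zeta-rate} it takes a genuinely different, self-contained route.

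\textbf{Deterministic part.} Your Ky Fan comparison is the paper's proof. The paper compares $P_A$ with the projector $P_*$ onto $\operatorname{range}(M_2)$, padded to rank $q$. That choice handles your separate case $q=d$ in the same stroke.

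\textbf{Concentration part.} The paper proves nothing here directly. It cites Lemma~3.6 of \citet{DWYZ23}, which for the same raw estimator gives $\zeta\le C_R(\sqrt{d/N}+k\sqrt{\log(k/\delta)/N}+\log(1/\delta)/N)$. It then absorbs this into $C_{k,R}\eps$ using $N\gtrsim d+\log(1/\delta)$. You instead derive a bound of the same order by splitting into three pieces, and each of your bounds holds:
\begin{itemize}
\item The bounded latent term $U_iU_i^\top-M_2$ concentrates at a dimension-free rate.
\item The cross term, conditionally on the $U_i$, is a Gaussian matrix whose bilinear forms have variance at most $R^2/N$. A net over $S^{d-1}\times S^{k-1}$, or a Gordon-type bound, gives order $R(\sqrt d+\sqrt k+\sqrt{\log(1/\delta)})/\sqrt N$.
\item The Wishart term has its quadratic part absorbed in the nontrivial regime.
\end{itemize}

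\textbf{Comparison.} The citation is shorter, and the paper reuses the same tail bound when it integrates to get the expected-risk version. Your decomposition avoids the external dependence. It also makes clear that only the cross and Wishart pieces carry the $\sqrt{d/N}$ term.
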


\begin{proof}
Let $P_*$ project onto $\operatorname{range}(M_2)$, extended arbitrarily to
rank $q$ if necessary.  By optimality of the top-$q$ eigenspace,
$\tr(P_A\widetilde M_2)\ge\tr(P_*\widetilde M_2)$.  Hence
\begin{align*}
 \tr((I-P_A)M_2)
 &=\tr(P_*M_2)-\tr(P_AM_2)\\
 &\le \tr(P_*(M_2-\widetilde M_2))
     +\tr(P_A(\widetilde M_2-M_2))
 \le2k\zeta.
\end{align*}
This argument uses no eigengap.

Lemma~3.6 of \citet{DWYZ23}, applied to the same raw estimator
\eqref{eq:m2hat}, gives
\[
 \zeta\le C_R\left(
  \sqrt{\frac dN}+k\sqrt{\frac{\log(k/\delta)}N}
  +\frac{\log(1/\delta)}N\right).
\]
The nontrivial branch takes its fixed constant sufficiently large that
$N>d$.  Since $k$ is fixed and $N\gtrsim d+\log(1/\delta)$, the display is at
most $C_{k,R}\eps$.
\end{proof}

\subsection{Sharp estimation of the Hermite fibers}

\begin{lemma}[Fiber covariance]\label{lem:fiber-cov}
Fix $\ell\ge1$, a symmetric tensor $q\in\Sym^{\ell-1}(\R^d)$ with
$\fnorm q=1$, and let $Y_{\ell,q}$ be as in \eqref{eq:fiber-stat}.  Uniformly
over all mixing distributions supported in $B_R^d$,
\begin{equation}\label{eq:fiber-cov-bound}
 \norm{\operatorname{Cov}(Y_{\ell,q}(X))}_{\op}\le C_{\ell,R},
 \qquad
 \tr\operatorname{Cov}(Y_{\ell,q}(X))\le C_{\ell,R}d.
\end{equation}
\end{lemma}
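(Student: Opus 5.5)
Both bounds follow by bounding the second-moment operator $\E[Y Y^\top]$ with $Y=Y_{\ell,q}(X)$, since $\operatorname{Cov}(Y)\preceq\E[YY^\top]$. For the operator norm it suffices to show that $\E\ip{Y}{v}^2\le C_{\ell,R}$ for every unit $v\in\R^d$. For the trace it suffices to show that $\E\norm{Y}^2\le C_{\ell,R}d$. We condition on $U=u$ and write $X=u+Z$. Both bounds then reduce to Gaussian computations that hold uniformly over $\norm u\le R$, and integrating over $U$ preserves them.

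\textbf{Step 1: shift expansion.} From the generating function \eqref{eq:hermite-gen}, replacing $x$ by $u+z$ multiplies the left side by $\exp\{\ip tu\}$. This gives the binomial shift identity
\[
 \cH_\ell(u+z)=\sum_{r=0}^{\ell}\binom{\ell}{r}\Sym\big(u^{\otimes r}\otimes\cH_{\ell-r}(z)\big).
\]
Contracting with $q$, each term becomes a sum of contractions of $u^{\otimes r}\otimes\cH_{\ell-r}(z)$ against $q$, where the free index sits either on a $u$ factor or on an $\cH$ factor. Every piece therefore has the form $\cH_{j}(z)\contr Q$, with $Q$ a tensor obtained from $q$ and copies of $u$. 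The piece is either a vector (free index on $\cH$) or $u$ times a scalar $\ip{\cH_j(z)}{Q'}$. By Cauchy--Schwarz, $\fnorm{Q},\fnorm{Q'}\le R^{r}\fnorm q$. There are at most $C_\ell$ pieces, so $\E\ip Yv^2$ and $\E\norm Y^2$ are bounded by $C_\ell$ times the sum of the corresponding quantities over the pieces.

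\textbf{Step 2: Gaussian Hermite norms.} I would use the orthogonality relation $\E\ip{\cH_j(Z)}{S}\ip{\cH_j(Z)}{S'}=j!\ip{S}{S'}$ for symmetric $S,S'$, which follows from \eqref{eq:hermite-gen}. It gives $\E\ip{\cH_j(Z)}{S}^2\le j!\fnorm S^2$. Scalar pieces therefore contribute at most $C R^{2r+2}$ to both quantities. For vector pieces $W=\cH_j(Z)\contr Q$, with $Q$ having $j-1$ contracted modes, I would write $\ip Wv=\ip{\cH_j(Z)}{\Sym(v\otimes Q)}$. Then
\[
 \E\ip Wv^2\le j!\fnorm{v\otimes Q}^2\le j!\fnorm Q^2\le j!R^{2r}.
\]
Symmetrizing $Q$ beforehand is harmless, because $\cH_j$ is symmetric. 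For the trace, summing over an orthonormal basis $v=e_1,\dots,e_d$ gives $\E\norm W^2\le d\, j!R^{2r}$. This yields both bounds in \eqref{eq:fiber-cov-bound} with $C_{\ell,R}$ depending only on $\ell$ and $R$.

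The main obstacle is bookkeeping in Step 1. I have to verify that the shift identity holds with the normalized $\Sym$ and binomial weights. I also have to check that, after contraction with a symmetric $q$, every term is a bounded-norm contraction of a single Hermite tensor rather than a product of Hermite tensors; the generating function guarantees this. As a fallback that avoids the expansion, one can bound directly $\E\ip{\cH_\ell(u+Z)}{\Sym(v\otimes q)}^2$. Here $\cH_\ell(u+Z)$ has a finite Hermite chaos expansion in $Z$ with coefficients polynomial in $u$. The trace bound then follows from the operator bound via $\tr\le d\norm{\cdot}_{\op}$, which is all that is needed.
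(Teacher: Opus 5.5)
Your proposal is correct and follows the paper's argument. You condition on $U=u$, expand $\cH_\ell(u+Z)$ by the Hermite shift formula, bound each term with the isometry $\E\ip{\cH_j(Z)}{S}^2=j!\fnorm{S}^2$ for symmetric $S$, and get the trace bound by summing directional second moments. Your ``fallback'' of first contracting against $\Sym(v\otimes q)$ is exactly what the paper does. It is slightly cleaner than your main route: every term is then a single symmetric contraction $S\contr u^{\otimes(\ell-j)}$, so distinct chaoses are orthogonal and you get an exact sum instead of a triangle-inequality bound that loses a factor $C_\ell$.

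One bookkeeping slip: when the free index lands on a $u$ factor, only $r-1$ copies of $u$ are contracted into $q$. So $\fnorm{Q'}\le R^{r-1}$, not $R^{r}$ (which fails for $R<1$), and that piece contributes $j!\,R^{2r}$ rather than $CR^{2r+2}$. This changes only the constant $C_{\ell,R}$.
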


\begin{proof}
Fix a unit vector $v$ and put $S=\Sym(v\otimes q)$, so $\fnorm S\le1$ and
$\ip{v}{Y_{\ell,q}(x)}=\ip{\cH_\ell(x)}S$.  The Hermite translation formula
gives
\[
 \ip{\cH_\ell(u+Z)}S
 =\sum_{j=0}^{\ell}\binom{\ell}{j}
   \ip{\cH_j(Z)}{S\contr u^{\otimes(\ell-j)}}.
\]
Because $S$ is symmetric, every contraction
$S\contr u^{\otimes(\ell-j)}$ is already a symmetric $j$-tensor.  Thus the
Gaussian-chaos isometry and orthogonality of distinct chaoses give the exact
equality, for fixed $u$,
\begin{align}\label{eq:chaos-isometry}
 \E_Z\ip{v}{Y_{\ell,q}(u+Z)}^2
 &=\sum_{j=0}^{\ell}\binom{\ell}{j}^2j!
   \fnorm{S\contr u^{\otimes(\ell-j)}}^2\\
 &\le \sum_{j=0}^{\ell}\binom{\ell}{j}^2j!R^{2(\ell-j)}
 =:C_{\ell,R}.\notag
\end{align}
Integrating over $U$ and subtracting the squared mean proves the operator
bound.  Summing the directional variance over an orthonormal basis of
$\R^d$ proves the trace bound.
\end{proof}

The use of a finite-covariance robust mean is material.  A raw empirical mean
of a degree-$\ell$ Hermite polynomial generally has sub-Weibull rather than
sub-Gaussian tails and need not yield the additive
$\sqrt{d/N}+\sqrt{\log(1/\delta)/N}$ guarantee.

\begin{proposition}[Range recovery in the moment metric]\label{prop:range}
With probability at least $1-\delta/2$ over blocks 1 and 2,
\begin{equation}\label{eq:projection-moment-rate}
 \max_{1\le\ell\le L}
 \fnorm{M_\ell(\Gamma)-P_H^{\otimes\ell}M_\ell(\Gamma)}
 \le C_{k,R}\eps.
\end{equation}
\end{proposition}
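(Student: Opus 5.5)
Apply \cref{lem:fiber} with $U\sim\Gamma$, the block-1 space $A$, the block-2 space $H\supseteq A$, and the block-2 estimates $\widehat g_{s,j}\in H$. The lemma bounds $\fnorm{M_\ell(U)-M_\ell(P_HU)}$ by $\ell e_{\ell-1}+3^\ell R^{\ell-2}\rho$ for $\ell\ge2$, and by $e_0$ for $\ell=1$. Since $M_\ell(P_HU)=P_H^{\otimes\ell}M_\ell(\Gamma)$, the left side of \eqref{eq:projection-moment-rate} is at most
\[
 \max_{1\le\ell\le L}\bigl(\ell e_{\ell-1}+3^\ell R^{\ell-2}\rho\bigr)
 \le L\max_{0\le s\le L-1}e_s+3^LR^{L}\max\{1,R^{-2}\}\rho .
\]
It therefore suffices to show $\rho\le C_{k,R}\eps$ and $e_s\le C_{k,R}\eps$ for every $s\le L-1$, each on an event of probability at least $1-\delta/4$, and then take a union bound.

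\emph{The term $\rho$.} This is exactly \cref{lem:coarse}: $\rho\le 2k\zeta$, and $\zeta\le C_{k,R}\eps$ with probability at least $1-\delta/4$ over block 1.

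\emph{The fiber errors $e_s$.} Condition on block 1. Then $A$ and the bases $\{E_{s,j}\}$ are fixed, and block 2 is independent of them. Each target is $g_{s,j}=\E Y_{s+1,E_{s,j}}(X)$ with $\fnorm{E_{s,j}}=1$. By \cref{lem:fiber-cov}, the covariance of $Y_{s+1,E_{s,j}}(X)$ has operator norm at most $C_{s+1,R}$ and trace at most $C_{s+1,R}d$. Estimate each of the $J:=\sum_{s=0}^{L-1}D_s\le m_k$ vectors, a constant number depending only on $k$, using \cref{prop:robust-mean} with confidence $\eta=\delta/(4J)$. The condition $N\ge c\log(1/\eta)$ holds in the nontrivial branch because $N\ge C^{(0)}_{k,R}(d+\log(1/\delta))$ and $\log J$ is a constant. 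A union bound then gives, with probability at least $1-\delta/4$, simultaneously for all $(s,j)$,
\[
 \norm{\widehat g_{s,j}-g_{s,j}}\le C_{k,R}\Bigl(\sqrt{d/N}+\sqrt{\log(4J/\delta)/N}\Bigr)\le C_{k,R}\eps,
\]
using $N\asymp n$. Summing over at most $D_s\le m_k$ indices gives $e_s\le\sqrt{D_s}\,C_{k,R}\eps\le C_{k,R}\eps$.

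\emph{Combining.} The two events together have probability at least $1-\delta/2$, and inserting both bounds into the display above yields \eqref{eq:projection-moment-rate}.

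\emph{Main obstacle.} The only point needing care is the conditioning. The test tensors $E_{s,j}$ depend on block 1, so \cref{lem:fiber-cov} has to hold uniformly over all unit symmetric $q$. It does, since its constant depends only on $(\ell,R)$. Block 2 must also be used only after $A$ is fixed, which the three-block splitting ensures. The dimension-free number of fibers is what keeps the union bound from costing more than a $\log J$ factor.
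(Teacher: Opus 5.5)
Your proposal is correct and is essentially the paper's proof. You condition on block 1, apply \cref{prop:robust-mean} with \cref{lem:fiber-cov} at confidence $\delta/(4J)$ to each of the $O_k(1)$ fibers, take the $\delta/4$ event of \cref{lem:coarse} for $\rho$, intersect the two events, and invoke \cref{lem:fiber}; you also check the condition $N\ge c\log(1/\eta)$, which the paper leaves implicit.

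One harmless constant slip: for $R<1$ your factor $3^LR^L\max\{1,R^{-2}\}=3^LR^{L-2}$ does not dominate $3^\ell R^{\ell-2}$ at $\ell=2$. Replace it by, e.g., $3^L\max\{1,R\}^{L}$, which changes only $C_{k,R}$.
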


\begin{proof}
Condition on block 1.  Then $A$ and all basis tensors $E_{s,j}$ are fixed and
independent of block 2.  Apply \cref{prop:robust-mean,lem:fiber-cov} with
failure probability $\delta/(4J_k^\star)$ to each fiber, where
\begin{equation}\label{eq:Jk}
 J(A):=\sum_{s=0}^{2k-2}D_s
 \le J_k^\star:=\binom{3k-2}{2k-2}.
\end{equation}
Since $J_k^\star$ is constant for fixed $k$, with conditional probability at least
$1-\delta/4$,
\[
 \max_{s,j}\norm{\widehat g_{s,j}-g_{s,j}}
 \le C_{k,R}\sqrt{\frac{d+\log(1/\delta)}N}
 \le C_{k,R}\eps.
\]
Thus every aggregate error $e_s$ in \cref{lem:fiber} is at most
$C_{k,R}\eps$.  On the event in \cref{lem:coarse}, $\rho\le C_{k,R}\eps$.
For every $1\le\ell\le L=2k-1$, the required index satisfies
$\ell-1\le L-1=2k-2$, exactly the range of fibers constructed in block 2.
Intersect the coarse and all-fiber events.  On their intersection,
\cref{lem:fiber} applies simultaneously to every $1\le\ell\le L$.
\end{proof}

\subsection{Linear-time evaluation of a fiber observation}

Although \eqref{eq:fiber-stat} is written tensorially, no ambient tensor is
formed.  Let $V\in\R^{d\times q}$ have orthonormal columns spanning $A$, put
$a=V^\top x$ and $Q=I-VV^\top$, and let
$q_0\in\Sym^{\ell-1}(\R^q)$.  Orthogonal product factorization of Hermite
tensors gives the exact identity
\begin{equation}\label{eq:fiber-computation}
 Y_{\ell,V^{\otimes(\ell-1)}q_0}(x)
 =Qx\,
   \ip{\cH_{\ell-1}^{(q)}(a)}{q_0}
 +V\big[\cH_\ell^{(q)}(a)\contr q_0\big].
\end{equation}
Here is a direct verification, including the normalization.  Decompose the
Hermite generating function along $A\oplus A^\perp$:
\begin{align*}
 &\exp\{\ip{t}{x}-\norm t^2/2\}\\
 &\quad=
 \exp\{\ip{V^\top t}{a}-\norm{V^\top t}^2/2\}
 \exp\{\ip{Qt}{Qx}-\norm{Qt}^2/2\}.
\end{align*}
Test the degree-$\ell$ coefficient against one free direction $h$ and the
last $\ell-1$ directions encoded by $V^{\otimes(\ell-1)}q_0$.  If
$h\in A^\perp$, the only contributing product has degree $1$ in the second
factor and degree $\ell-1$ in the first, yielding
$\ip{h}{Qx}\ip{\cH_{\ell-1}^{(q)}(a)}{q_0}$.  If $h=Vz\in A$, all $\ell$
directions come from the first factor, yielding
$\ip{z}{\cH_\ell^{(q)}(a)\contr q_0}$.  The binomial coefficient in the
degree split cancels the $1/\ell$ from normalized symmetrization of the one
free slot.  These two orthogonal components prove
\eqref{eq:fiber-computation} with no missing combinatorial constant.

The first term is one scalar times an ambient vector; the second lies in a
space of dimension at most $k$.  For fixed $k$ and $\ell\le2k-1$, it takes
$O_k(d)$ arithmetic operations to evaluate \eqref{eq:fiber-computation}.

\section{Proper fitting after projection}

Let $\Gamma_H=(B^\top)\#\Gamma$, a mixing distribution on $B_R^m$.  Conditional
on the first two blocks, block 3 is independent and $B^\top Z\sim N(0,I_m)$.

\begin{lemma}[Low-dimensional moment estimation]\label{lem:low-mom}
With conditional probability at least $1-\delta/4$,
\begin{equation}\label{eq:low-mom-rate}
 \max_{1\le\ell\le L}
 \fnorm{\widetilde T_\ell-M_\ell(\Gamma_H)}
 \le C_{k,R}\sqrt{\frac{1+\log(1/\delta)}N}
 \le C_{k,R}\eps.
\end{equation}
\end{lemma}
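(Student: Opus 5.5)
Conditionally on blocks 1 and 2, the matrix $B$ is fixed and the vectors $B^\top X_i$, $i\in I_3$, are iid draws from $P_{\Gamma_H}$ in $\R^m$. Here $\Gamma_H$ is supported on $B_R^m$ and $m\le m_k$ is bounded by a constant depending only on $k$. By \eqref{eq:hermite-unbiased} applied in dimension $m$, every coordinate of $\cH_\ell(B^\top X)$ is an unbiased estimate of the corresponding coordinate of $T_\ell=M_\ell(\Gamma_H)$. The plan is to treat each coordinate as a scalar mean-estimation problem, apply median-of-means to it, and then union bound over the coordinates. For fixed $k$ there are only finitely many coordinates, namely at most $\sum_{\ell\le L}m_k^\ell=:K_k$.

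The first step is a variance bound. For a fixed multi-index, the coordinate $\ip{\cH_\ell(B^\top X)}{e_{i_1}\otimes\cdots\otimes e_{i_\ell}}$ equals $\ip{\cH_\ell(B^\top X)}{S}$ with $S=\Sym(e_{i_1}\otimes\cdots\otimes e_{i_\ell})$ and $\fnorm S\le1$. The chaos isometry computation \eqref{eq:chaos-isometry} from the proof of \cref{lem:fiber-cov} applies verbatim in $\R^m$ with $\norm{u}\le R$. It bounds the second moment, and hence the variance, by $C_{\ell,R}$, uniformly in $\Gamma_H$ and with no dependence on $m$.

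The second step is the deviation bound. Scalar median-of-means with $\lceil 8\log(4K_k/\delta)\rceil$ blocks gives, for each coordinate, error at most $C\sqrt{C_{\ell,R}\log(4K_k/\delta)/N}$ with probability at least $1-\delta/(4K_k)$. This requires $N\gtrsim\log(K_k/\delta)$, which the nontrivial branch guarantees. A union bound over the $K_k$ coordinates gives, with conditional probability at least $1-\delta/4$, a uniform coordinatewise error of order $\sqrt{(1+\log(1/\delta))/N}$, using $\log(4K_k/\delta)\le C_k(1+\log(1/\delta))$. Summing squares over at most $m_k^\ell$ coordinates converts this to the Frobenius bound in \eqref{eq:low-mom-rate}, at the cost of a constant factor $m_k^{L/2}$.

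The final inequality follows from $N\ge n/3-1\gtrsim n$ in the nontrivial regime and $1+\log(1/\delta)\le C(d+\log(1/\delta))$, so the bound is at most $C_{k,R}\eps$. The only step needing care is the variance bound. The point to check is that the constant depends on $R$ and $\ell$ alone and not on $m$; this is what makes a coordinatewise union bound, rather than a vector robust mean, sufficient here. Everything else is routine bookkeeping of constants depending only on $(k,R)$.
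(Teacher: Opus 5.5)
Your proposal is correct and follows the paper's proof. Both arguments use coordinatewise scalar median-of-means, bound each coordinate's variance by $C_{k,R}$ via the chaos isometry \eqref{eq:chaos-isometry}, take a union bound over the $O_k(1)$ coordinates and degrees, and recombine in Frobenius norm. The only difference is cosmetic: you use all $m^\ell$ standard tensor entries, reduced to symmetric test tensors through $\Sym$, instead of a Frobenius-orthonormal basis of $\Sym^\ell(\R^m)$, and this changes only constants.
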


\begin{proof}
For each $\ell$, vectorize $\cH_\ell(B^\top X)$ in a Frobenius-orthonormal
basis of $\Sym^\ell(\R^m)$.  The number of coordinates is
$\binom{m+\ell-1}{\ell}=O_k(1)$.  The calculation in
\eqref{eq:chaos-isometry} bounds each coordinate variance by $C_{k,R}$.
Apply scalar median-of-means to each coordinate with the failure probability
divided over the $O_k(1)$ coordinates and degrees.  Combining coordinate
bounds in Frobenius norm proves the claim.
\end{proof}

\begin{lemma}[Proper net fit]\label{lem:net-fit}
On the event \eqref{eq:low-mom-rate}, the minimizer in \eqref{eq:minfit}
satisfies
\begin{equation}\label{eq:fit-moment-rate}
 \max_{1\le\ell\le L}
 \fnorm{M_\ell(\widehat\gamma)-M_\ell(\Gamma_H)}
 \le C_{k,R}(\tau+\eps).
\end{equation}
Moreover,
\begin{equation}\label{eq:net-size}
 |\cG_\tau|\le(C_R/\tau)^{km+k-1}.
\end{equation}
More generally, if a candidate has objective value in \eqref{eq:minfit}
within $\xi_{\rm fit}$ of the minimum, the right side of
\eqref{eq:fit-moment-rate} increases by at most $\xi_{\rm fit}$.
\end{lemma}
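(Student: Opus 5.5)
The plan is a standard net-plus-triangle-inequality argument, with one Lipschitz estimate for the moment maps on $B_R^m$. Throughout, write $\Phi(\gamma)=\max_{1\le\ell\le L}\fnorm{M_\ell(\gamma)-\widetilde T_\ell}$ for the objective in \eqref{eq:minfit}.

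\textbf{Step 1: a good net point.} I first show that $\cG_\tau$ contains a candidate $\bar\gamma$ close to $\Gamma_H$ in every moment. Write $\Gamma_H=\sum_{i=1}^{s}w_i\delta_{\theta_i}$ with $s\le k$ and $\theta_i\in B_R^m$, and pad it with zero weights so that it has exactly $k$ atoms. Pick $\bar\theta_i\in\cC_\tau$ with $\norm{\theta_i-\bar\theta_i}\le\tau$, and $\bar w\in\cW_\tau$ with $\norm{w-\bar w}_1\le\tau$. Then split
\[
 M_\ell(\Gamma_H)-M_\ell(\bar\gamma)=\sum_i w_i\big(\theta_i^{\otimes\ell}-\bar\theta_i^{\otimes\ell}\big)+\sum_i(w_i-\bar w_i)\,\bar\theta_i^{\otimes\ell}.
\]
The telescoping identity $x^{\otimes\ell}-y^{\otimes\ell}=\sum_{r}x^{\otimes r}\otimes(x-y)\otimes y^{\otimes(\ell-r-1)}$ gives $\fnorm{x^{\otimes\ell}-y^{\otimes\ell}}\le\ell R^{\ell-1}\norm{x-y}$ for $x,y\in B_R^m$. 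Hence the first sum is at most $\ell R^{\ell-1}\tau$. The second sum is at most $R^\ell\tau$. Since $\ell\le L=2k-1$, this yields $\max_\ell\fnorm{M_\ell(\bar\gamma)-M_\ell(\Gamma_H)}\le C_{k,R}\tau$.

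\textbf{Step 2: triangle inequality.} On the event \eqref{eq:low-mom-rate} we have $\Phi(\bar\gamma)\le C_{k,R}(\tau+\eps)$. Let $\gamma'$ be any candidate with $\Phi(\gamma')\le\min\Phi+\xi_{\rm fit}$. Then
\[
 \max_\ell\fnorm{M_\ell(\gamma')-M_\ell(\Gamma_H)}\le\Phi(\gamma')+\max_\ell\fnorm{\widetilde T_\ell-M_\ell(\Gamma_H)}\le\Phi(\bar\gamma)+\xi_{\rm fit}+C_{k,R}\eps.
\]
This bound is $C_{k,R}(\tau+\eps)+\xi_{\rm fit}$, which proves \eqref{eq:fit-moment-rate} and its approximate-minimizer version; the exact minimizer is the case $\xi_{\rm fit}=0$. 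One point must be checked here: the elements of $\cG_\tau$ are supported on points of $\cC_\tau\subset B_R^m$, which is needed for properness later. Standard nets can be taken inside the ball.

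\textbf{Step 3: counting.} By volumetric arguments, $|\cC_\tau|\le(1+2R/\tau)^m\le(C_R/\tau)^m$ for $\tau\le1$. An $\ell_1$-net of the simplex can be taken inside $\Delta_{k-1}$ with size at most $(C/\tau)^{k-1}$, for example by covering the projection onto the first $k-1$ coordinates and using that the $\ell_1$ error on the last coordinate is controlled by the others. Multiplying, $|\cG_\tau|\le|\cW_\tau|\,|\cC_\tau|^k\le(C_R/\tau)^{km+k-1}$.

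The only mildly delicate point is Step 1. One must allow vanishing weights, so that $\Gamma_H$ with fewer than $k$ atoms still lies near the grid. One must also ensure that the simplex net lies in the simplex, so that $\bar\gamma$ is a probability measure; the counting bound is routine by comparison.
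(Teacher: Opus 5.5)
Your proof is correct and follows the paper's argument essentially step for step. You pad $\Gamma_H$ to $k$ atoms, pick a nearby grid candidate using the telescoping bound, finish with two triangle inequalities carrying the $\xi_{\rm fit}$ slack, and count the product net. The only cosmetic difference is the split: you write the moment difference as $\sum_i w_i(\theta_i^{\otimes\ell}-\bar\theta_i^{\otimes\ell})+\sum_i(w_i-\bar w_i)\bar\theta_i^{\otimes\ell}$, whereas the paper uses the mirror-image split with $\bar w_i$ on the location term and $\theta_i^{\otimes\ell}$ on the weight term, and both are valid since the nets lie in $B_R^m$ and the simplex.
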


\begin{proof}
Pad $\Gamma_H$ with zero-weight atoms to write it as
$\sum_{i=1}^kw_i\delta_{\theta_i}$.  Choose net points with
$\norm{\theta_i-\bar\theta_i}\le\tau$ and weights with
$\norm{w-\bar w}_1\le\tau$.  For every $\ell\le L$,
\begin{align*}
 \fnorm{M_\ell(\Gamma_H)-\sum_i\bar w_i\bar\theta_i^{\otimes\ell}}
 &\le R^\ell\norm{w-\bar w}_1
  +\sum_i\bar w_i
    \fnorm{\theta_i^{\otimes\ell}-\bar\theta_i^{\otimes\ell}}\\
 &\le R^\ell\tau+\ell R^{\ell-1}\tau
 \le C_{k,R}\tau,
\end{align*}
where we used the telescoping inequality
\[
 \fnorm{x^{\otimes\ell}-y^{\otimes\ell}}
 \le\ell\max\{\norm x,\norm y\}^{\ell-1}\norm{x-y}.
\]
Call the resulting candidate $\gamma_0$.  If $S(\gamma)$ denotes the objective
in \eqref{eq:minfit}, then a $\xi_{\rm fit}$-approximate minimizer satisfies
$S(\widehat\gamma)\le S(\gamma_0)+\xi_{\rm fit}$.  Two applications of the
triangle inequality and \eqref{eq:low-mom-rate} therefore give
\eqref{eq:fit-moment-rate} with the additional term $\xi_{\rm fit}$.  The
stated exact-minimizer bound is the special case $\xi_{\rm fit}=0$.

A Euclidean ball in fixed dimension $m$ has a $\tau$-net of size at most
$(C_R/\tau)^m$, and the simplex has an $\ell_1$-$\tau$-net of size at most
$(C/\tau)^{k-1}$.  Taking $k$ ordered location points proves
\eqref{eq:net-size}.
\end{proof}

\section{Proof of the main theorem}

\begin{proof}
We first prove the tail statement and denote the estimator constructed in
\cref{eq:m2hat,eq:true-fibers,eq:minfit,eq:lift} by
$\widehat\Gamma^\delta$.  Work in the nontrivial regime and intersect the events in
\cref{prop:range,lem:low-mom}.  The former has failure probability at most
$\delta/2$ and the latter at most $\delta/4$, so their intersection has
probability at least $1-3\delta/4\ge1-\delta$.  Because $B$ is an isometry,
\begin{align*}
 &\fnorm{M_\ell(P_H\#\Gamma)-M_\ell(B\#\widehat\gamma)}\\
 &\qquad=
 \fnorm{B^{\otimes\ell}
   \{M_\ell(\Gamma_H)-M_\ell(\widehat\gamma)\}}
 =\fnorm{M_\ell(\Gamma_H)-M_\ell(\widehat\gamma)}.
\end{align*}
Combining \eqref{eq:projection-moment-rate},
\eqref{eq:fit-moment-rate}, and $\tau=\eps$ in the nontrivial regime yields
\begin{equation}\label{eq:all-moments-final}
 \max_{1\le\ell\le2k-1}
 \fnorm{M_\ell(\Gamma)-M_\ell(\widehat\Gamma^\delta)}
 \le C_{k,R}\eps.
\end{equation}
Both measures have at most $k$ atoms in $B_R^d$.  Apply
\cref{thm:dwyz} to \eqref{eq:all-moments-final} to obtain
$H(P_{\widehat\Gamma^\delta},P_\Gamma)\le C_{k,R}\eps$.

If the nontrivial sample-size condition fails, the estimator returns
$\delta_0$.  Since Hellinger distance is at most $\sqrt2$ and $\eps$ is then
bounded below, the same conclusion holds after changing $C_{k,R}$.  Adjusting
the constant proves \eqref{eq:main-tail}.

We next analyze the single estimator $\widehat\Gamma^{\mathrm E}$ defined in
\cref{sec:estimator} for \eqref{eq:main-expectation}.  Put
$\eps_{\mathrm E}=\sqrt{d/n}$ and work first in its nontrivial regime
$N\ge C_{k,R}^{(0)}d$.

To spell out the integration step, write $u=\log(1/\delta)$.  Lemma~3.6 of
\citet{DWYZ23} implies, for $u\ge1$, that with probability at least $1-e^{-u}$,
\[
 \norm{\widetilde M_2-M_2}_{\op}
 \le C_R\left\{\sqrt{\frac dN}
   +k\sqrt{\frac{\log k+u}{N}}+\frac uN\right\}=:f(u).
\]
Since $f$ is increasing, the tail-integral identity yields
$\E\norm{\widetilde M_2-M_2}_{\op}
\le f(1)+\int_1^\infty e^{-u}f'(u)\,du$.  The elementary integrals of
$e^{-u}u^{-1/2}$ and $e^{-u}$ therefore give
\[
 \E\norm{\widetilde M_2-M_2}_{\op}
 \le C_{k,R}\left(\sqrt{\frac dN}+\frac dN\right),
\]
and therefore, since $N\gtrsim_{k,R}d$,
\begin{equation}\label{eq:expected-zeta}
 \E\norm{\widetilde M_2-M_2}_{\op}
 \le C_{k,R}\sqrt{\frac dN}.
\end{equation}
Conditional on block 1, \cref{lem:fiber-cov} and the iid empirical-mean
identity give, for every fiber,
\begin{equation}\label{eq:expected-fiber}
 \E\left[\norm{\widehat g_{s,j}-g_{s,j}}^2\mid I_1\right]
 =\frac{\tr\operatorname{Cov}(Y_{s+1,E_{s,j}}(X))}{N}
 \le C_{k,R}\frac dN.
\end{equation}
For the aggregate error $e_s$ in \cref{lem:fiber}, conditional Jensen gives
\begin{align*}
 \E[e_s\mid I_1]
 &\le\left(\sum_{j=1}^{D_s}
       \E[\norm{\widehat g_{s,j}-g_{s,j}}^2\mid I_1]
       \right)^{1/2}
 \le C_{k,R}\sqrt{\frac dN}.
\end{align*}
Dependence among the fibers is irrelevant.  Samplewise,
\cref{lem:coarse,lem:fiber} gives
\[
 \max_{\ell\le L}
 \fnorm{M_\ell(\Gamma)-P_H^{\otimes\ell}M_\ell(\Gamma)}
 \le L\max_{s<L}e_s+C_{k,R}\rho,
 \qquad \rho\le2k\norm{\widetilde M_2-M_2}_{\op}.
\]
There are only $O_k(1)$ fibers and degrees.  Taking expectations and using
\eqref{eq:expected-zeta}--\eqref{eq:expected-fiber}, together with
$N\asymp n$, implies
\begin{equation}\label{eq:expected-projection}
 \E\max_{1\le\ell\le L}
 \fnorm{M_\ell(\Gamma)-P_H^{\otimes\ell}M_\ell(\Gamma)}
 \le C_{k,R}\sqrt{\frac dn}.
\end{equation}

Condition now on $(I_1,I_2)$.  Then $B,m$, and $\Gamma_H$ are fixed and the
third block is independent.  In Frobenius-orthonormal coordinates, the total
number of coordinates in
$\bigoplus_{\ell=1}^L\Sym^\ell(\R^m)$ is $O_k(1)$ and each coordinate has
variance $O_{k,R}(1)$.  Hence
\begin{align*}
 &\E\left[
   \max_{\ell\le L}\fnorm{\widetilde T_\ell-M_\ell(\Gamma_H)}
   \mid I_1,I_2\right]\\
 &\quad\le
 \left(\sum_{\ell=1}^L
 \E[\fnorm{\widetilde T_\ell-M_\ell(\Gamma_H)}^2\mid I_1,I_2]
 \right)^{1/2}
 \le C_{k,R}N^{-1/2}.
\end{align*}
The bound is uniform in the conditioning, so
\begin{equation}\label{eq:expected-low-mom}
 \E\max_{1\le\ell\le L}
 \fnorm{\widetilde T_\ell-M_\ell(\Gamma_H)}
 \le C_{k,R}N^{-1/2}.
\end{equation}
The proof of \cref{lem:net-fit} is deterministic conditional on the estimated
moments and gives the sharper random bound
\[
 \max_{\ell\le L}
 \fnorm{M_\ell(\widehat\gamma)-M_\ell(\Gamma_H)}
 \le C_{k,R}\tau_{\mathrm E}+
 2\max_{\ell\le L}\fnorm{\widetilde T_\ell-M_\ell(\Gamma_H)}.
\]
Combining this display with \eqref{eq:expected-projection}--
\eqref{eq:expected-low-mom}, using $N^{-1/2}\le\sqrt{d/N}\lesssim
\sqrt{d/n}$ for $d\ge1$, lifting by $B$, and applying the pointwise Hellinger
upper bound in \cref{thm:dwyz} proves \eqref{eq:main-expectation}.  If
$N<C_{k,R}^{(0)}d$, the estimator returns $\delta_0$; then
$\min\{1,\sqrt{d/n}\}$ is bounded below by a constant depending only on
$k,R$, so $H\le\sqrt2$ proves the same claim.

Polynomial arithmetic time for both versions is verified in
\cref{sec:complexity}.
\end{proof}

\section{Arithmetic complexity and numerical issues}\label{sec:complexity}

The total number of fibers is $J(A)$ and is bounded by
\[
 J_k^\star=\binom{3k-2}{2k-2},
\]
and $m\le m_k=k+J_k^\star$.  These are constants for fixed $k$.  Equality
$J(A)=J_k^\star$ holds only when $q=k$; the upper bound is all that is used.
Forming
\eqref{eq:m2hat} and a full eigendecomposition costs at most
$O(nd^2+d^3)$ arithmetic operations.  The robust mean calls are polynomial
time by \cref{prop:robust-mean}; the fiber observations themselves are
computed in $O_k(d)$ time via \eqref{eq:fiber-computation}.  For either
estimator, let $\tau_*$ denote its relevant mesh, $\tau_\delta$ or
$\tau_{\mathrm E}$.  The final search has size
\[
 (C_R/\tau_*)^{km_k+k-1}.
\]
In the nontrivial regime $\tau_*\gtrsim n^{-1/2}$, hence this is polynomial in
$n$ for every fixed $k$.  One coarse bound is
\begin{equation}\label{eq:runtime}
 O(nd^2+d^3)+\operatorname{poly}_{k,R}(n,d,\log(1/\delta))
 +(C_R/\tau_*)^{km_k+k-1}
\end{equation}
for the tail estimator; for the expected-risk estimator, omit the
$\log(1/\delta)$ input and take $\tau_*=\tau_{\mathrm E}$.
For example, $m_3\le38$ and the generic net exponent is
$3\cdot38+2=116$.  Since the smallest relevant mesh is of order
$n^{-1/2}$, this crude bound permits as many as $O_R(n^{58})$ candidates for
$k=3$.  The construction proves polynomial-time computability for fixed $k$;
it is not proposed as a practical implementation, and the exponent is not
optimized.

The enumeration also need not resolve an exact comparison between nearly tied
scores.  By \cref{lem:net-fit}, it may stop at any candidate whose score is
within $c_{k,R}\tau_*$ of the best enumerated score.  Thus objective values
only need to be evaluated to additive accuracy $O_{k,R}(\tau_*)$.

The nets can be enumerated explicitly.  Write $\tau$ for the relevant mesh in
the rest of this paragraph.  If $R\le\tau$, the origin alone covers $B_R^m$.
Otherwise take a Cartesian mesh of spacing $h\asymp\tau/\sqrt m$ inside
$B_R^m$.  To cover a point on the boundary,
replace $\theta$ by $(1-\tau/(4R))\theta$ and then round each coordinate; with
a sufficiently small absolute constant in $h$, the rounded point remains in
$B_R^m$ and is within distance $\tau$.  For the
weights, take all integer compositions of
$Q=\lceil2k/\tau\rceil$ divided by $Q$.  Standard balanced rounding gives an
$\ell_1$ error at most $k/Q\le\tau/2$.  These constructions have the
cardinality asserted in \eqref{eq:net-size} and can be generated with
polynomial delay for fixed $k$.

Two details avoid hidden conditioning assumptions.

\begin{enumerate}
\item No eigengap is needed.  In finite precision it suffices to compute a
rank-$q$ projector whose Ky Fan objective is within $\xi$ of optimum.  The
proof of \cref{lem:coarse} then gives $\rho\le2k\zeta+\xi$.

\item The real-arithmetic theorem uses the full algebraic span of all estimated
fibers, as in \eqref{eq:H-def}; no threshold, eigengap, or conditioning
decision is needed.  If a thresholded SVD is
used in a finite-precision implementation, \cref{lem:fiber} should be written
with
\[
 e_s^2=\sum_j\operatorname{dist}(g_{s,j},H)^2
 \]
instead.  Then
\[
 \operatorname{dist}(g_{s,j},H)
 \le\norm{g_{s,j}-\widehat g_{s,j}}
    +\operatorname{dist}(\widehat g_{s,j},H).
\]
Discarding residual columns at singular-value threshold $\xi$ therefore adds
at most $O(\sqrt{D_s}\,\xi)=O_k(\xi)$ to the aggregate error.  One must not
discard a column while continuing to set its distance term to zero.
\end{enumerate}

Thus standard polynomial-accuracy eigensolvers and SVD routines preserve the
statistical bound.  The theorem is stated in the real-arithmetic model.  A
rational implementation of the final search can use a dyadic location mesh
of spacing $\asymp\tau$ and a simplex grid with denominator $O_k(1/\tau)$;
these grid points have $O_k(\log n)$ bits.  A full Turing-model treatment would
also specify weak-SDP precision for the chosen robust-mean primitive, but no
statistical issue is hidden there.

\section{Why a quadratic range finder loses a square root}

The original general algorithm controls projection using
\[
 H^2(P_\Gamma,P_{P_A\#\Gamma})
 \le\tfrac12W_2^2(\Gamma,P_A\#\Gamma)
 \lesssim_k\norm{\widetilde M_2-M_2}_{\op}.
\]
Since the operator-norm error is $\sqrt{d/n}$, this gives
$H\lesssim(d/n)^{1/4}$.  The following proposition isolates a local
fourth-root obstruction for methods that discard the signed samples and retain
only quadratic sketches.  It does not apply to our algorithm, which uses fresh
raw observations after the coarse quadratic range-finding step.  Over the full
class, quadratic sketches also have a stronger global sign nonidentifiability;
the cap construction below deliberately removes that artifact.

For $R>0$ and $d\ge3$, fix an orthonormal basis $e_1,\ldots,e_d$, put
$a=R/4$, and define the
sign-oriented cap
\[
 \mathcal V_d^+=\left\{\frac{\sqrt3}{2}e_2+\frac12z:
 z\in S(\Span\{e_3,\ldots,e_d\})\right\}.
\]
For $0<t\le t_0:=\min\{R/4,1\}$, let
\begin{equation}\label{eq:local-cap-family}
 \mathcal C_{t,d}(R)=\left\{\Gamma_{t,v}:v\in\mathcal V_d^+\right\},
 \qquad
 \Gamma_{t,v}=\frac13\left(
  \delta_{-ae_1+tv}+\delta_{-2tv}+\delta_{ae_1+tv}\right).
\end{equation}
Every member has three atoms in $B_R^d$, and the cap fixes the sign because
$\ip{v}{e_2}=\sqrt3/2$ for every $v\in\mathcal V_d^+$.

\begin{proposition}[Local fourth-root obstruction]
\label{prop:quadratic-lb}
Fix $R>0$.  There are constants $\alpha_R,c_R,c_0>0$ and $d_0$ such that,
whenever $d_0\le d\le n$, setting $t=\alpha_R(d/n)^{1/4}$ makes
$\mathcal C_{t,d}(R)$ well defined, and every possibly randomized density
estimator $\widehat P$ measurable with respect to
$(X_1X_1^\top,\ldots,X_nX_n^\top)$ satisfies
\begin{equation}\label{eq:quadratic-lb}
 \sup_{\Gamma\in\mathcal C_{t,d}(R)}
 \Pp_\Gamma\left\{
 H(\widehat P,P_\Gamma)\ge c_R(d/n)^{1/4}
 \right\}\ge c_0.
\end{equation}
Moreover, for
$\Gamma_0=\frac13(\delta_{-ae_1}+\delta_0+\delta_{ae_1})$,
\begin{equation}\label{eq:local-cap-radius}
 \sup_{\Gamma\in\mathcal C_{t,d}(R)}H(P_\Gamma,P_{\Gamma_0})\le C_Rt.
\end{equation}
\end{proposition}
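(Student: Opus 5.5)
The plan is a Fano argument over a packing of the cap. It rests on one structural fact: a quadratic sketch cannot see the sign of $X$, so its law depends only on the symmetrized mixture. Symmetrization removes exactly the moment information that separates the alternatives at order $t$.

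\emph{Step 1 (reduction to symmetrized mixtures).} Since $X_iX_i^\top=(-X_i)(-X_i)^\top$, the law of $X_iX_i^\top$ under $P_\Gamma$ equals its law under $\tfrac12(P_\Gamma+P_\Gamma\circ(-\cdot))$. Because $N(0,I_d)$ is symmetric, this averaged law is $P_{\Gamma^{\rm s}}$ with $\Gamma^{\rm s}=\tfrac12(\Gamma+(-\cdot)\#\Gamma)$. For $\Gamma=\Gamma_{t,v}$ this gives the six-atom measure
\[
\Gamma^{\rm s}_{t,v}=\tfrac16\bigl(\delta_{\pm(ae_1+tv)}+\delta_{\pm(ae_1-tv)}+\delta_{\pm2tv}\bigr),
\]
supported in $B_R^d$ when $t\le R/4$. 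By data processing and tensorization, the Kullback--Leibler divergence between the $n$-sample sketch laws for $v,v'$ is at most $n\,\mathrm{KL}(P_{\Gamma^{\rm s}_{t,v}},P_{\Gamma^{\rm s}_{t,v'}})$. We bound this by \cref{thm:dwyz} with parameter $k=6$, which explains the choice of $6$. The odd moments of $\Gamma^{\rm s}_{t,v}$ vanish. The even moments through order $11$ depend on $v$ only through terms of order $t^2$ or higher. For instance, $M_2=\tfrac{2a^2}{3}e_1e_1^\top+2t^2vv^\top$, and the $v$-dependent part of $M_4$ is $O_R(t^2)$. Hence
\[
\mathrm{KL}\bigl(P_{\Gamma^{\rm s}_{t,v}},P_{\Gamma^{\rm s}_{t,v'}}\bigr)\le C_R t^4 .
\]

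\emph{Step 2 (separation in the full model).} Write $v=\tfrac{\sqrt3}{2}e_2+\tfrac12 z$. Fix a packing $\{z_1,\dots,z_M\}$ of the unit sphere of $\Span\{e_3,\dots,e_d\}$ with $\norm{z_i-z_j}\ge1/2$ and $\log M\ge c(d-2)$. The corresponding $v_i$ satisfy $\norm{v_i-v_j}\ge1/4$. The first moment of $\Gamma_{t,v}$ is zero. Its third moment contains the term $2a^2t\,\Sym(e_1\otimes e_1\otimes v)$, plus terms of order $t^3$. Therefore
\[
\fnorm{M_3(\Gamma_{t,v_i})-M_3(\Gamma_{t,v_j})}\ge c_R t\norm{v_i-v_j}-C_Rt^3\ge c_R' t
\]
once $\alpha_R$ is small. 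Apply the lower bound in \cref{thm:dwyz}; both measures have three atoms, so any admissible parameter works. This gives $H(P_{\Gamma_{t,v_i}},P_{\Gamma_{t,v_j}})\ge 2c_Rt$. By the triangle inequality, an estimator with $H(\widehat P,P_{\Gamma_{t,v_i}})<c_Rt$ identifies $i$.

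\emph{Step 3 (Fano).} Fano's inequality over the sketch observations gives a failure probability at least
\[
1-\frac{C_Rnt^4+\log2}{\log M}.
\]
Choosing $t=\alpha_R(d/n)^{1/4}$ with $\alpha_R$ small makes $C_Rnt^4\le\tfrac14 c(d-2)$. With $d\ge d_0$, the $\log2$ term is also absorbed, so the failure probability is at least $c_0$. This proves \eqref{eq:quadratic-lb} with $c_R(d/n)^{1/4}$ equal to the separation radius. The condition $d\le n$ ensures $t\le t_0$ after shrinking $\alpha_R$, so the family is well defined. Randomized estimators are handled because Fano's inequality applies to any test measurable in the sketches and independent auxiliary randomness.

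\emph{Step 4 (radius).} For \eqref{eq:local-cap-radius}, the atoms of $\Gamma_{t,v}$ are within $2t$ of those of $\Gamma_0$ under matched weights. The telescoping inequality from \cref{lem:net-fit} gives moment differences $O_R(t)$ for $\ell\le5$. The Hellinger upper bound in \cref{thm:dwyz} with $k=3$ then yields $H\le C_Rt$.

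The main obstacle is Step 1's claim that every $v$-dependent even moment of $\Gamma^{\rm s}_{t,v}$ is genuinely $O(t^2)$, with no surviving linear-in-$t$ cross term. This is where the cancellation from pairing $\pm(ae_1+tv)$ with $\pm(ae_1-tv)$ must be verified. I would check it by expanding $(ae_1+tv)^{\otimes\ell}+(ae_1-tv)^{\otimes\ell}$: only even powers of $tv$ survive, and the atom $\pm2tv$ contributes $O(t^\ell)$ with $\ell\ge2$.
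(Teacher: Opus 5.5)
Your proposal is correct and follows essentially the same route as the paper. You sign-symmetrize to a six-atom mixture whose moments through degree $11$ differ by $O_R(t^2)$, and bound the per-sketch divergence by $C_Rt^4$ via the moment characterization with parameter $6$. You then separate the unsymmetrized alternatives at order $t$ through the third moment and the Hellinger lower bound, conclude with Fano over a cap packing, and obtain the radius by telescoping moments through degree $5$.

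The differences are cosmetic. You use the KL side of the characterization with KL data processing, where the paper passes through $\chi^2$ and $\mathrm{KL}\le\chi^2$. You also lower-bound the full Frobenius difference of $M_3$ and absorb the $-2t^3v^{\otimes3}$ correction. The paper instead uses the exact contraction $M_3[e_1,e_1,\cdot]=\tfrac{2a^2t}{3}v$, which has no correction term.
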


\begin{proof}
Each distribution in \eqref{eq:local-cap-family} is centered and
\begin{equation}\label{eq:diagnostic-moments}
 M_2(\Gamma_{t,v})=\frac{2a^2}{3}e_1e_1^\top+2t^2vv^\top,
 \qquad
 M_3(\Gamma_{t,v})[e_1,e_1,\cdot]=\frac{2a^2t}{3}v.
\end{equation}

A standard spherical packing inside $\mathcal V_d^+$ gives
$v_1,\ldots,v_M$, with
$M\ge\exp(c d)$ and $\norm{v_i-v_j}\ge1/2$ for $i\ne j$.

Let $T(x)=xx^\top$ and sign-symmetrize
\[
 \bar\Gamma_{t,v}=\frac12\left(
  \Gamma_{t,v}+(-\operatorname{id})\#\Gamma_{t,v}\right).
\]
Because $T(x)=T(-x)$ and Gaussian noise is symmetric,
\begin{equation}\label{eq:quad-sym-law}
 T\#P_{\Gamma_{t,v}}=T\#P_{\bar\Gamma_{t,v}}.
\end{equation}
The symmetrized mixing distribution has at most six atoms.  Its odd moments
vanish.  For $r\ge1$, direct binomial expansion gives
\begin{align}\label{eq:sym-even-expansion}
 M_{2r}(\bar\Gamma_{t,v})
 &=\frac23\sum_{j=0}^{r}\binom{2r}{2j}
   a^{2r-2j}t^{2j}
   \Sym\!\left(e_1^{\otimes(2r-2j)}\otimes v^{\otimes2j}\right)
   +\frac{2^{2r}}3t^{2r}v^{\otimes2r}.
\end{align}
The $j=0$ term is independent of $v$, while every other term contains at
least two factors of $tv$.  Moreover, for unit vectors $v,w$ and every
$m\ge1$, telescoping gives
$\fnorm{v^{\otimes m}-w^{\otimes m}}\le m\norm{v-w}$.  Since $t\le1$,
\eqref{eq:sym-even-expansion} therefore implies, uniformly over $v,w$ and
$\ell\le11$,
\begin{equation}\label{eq:sym-moment-t2}
 \fnorm{M_\ell(\bar\Gamma_{t,v})-
         M_\ell(\bar\Gamma_{t,w})}\le C_{\ell,R}t^2.
\end{equation}
The chi-squared upper bound in Theorem~4.2 of \citet{DWYZ23}, applied with its
atom parameter set to $6$, uses moments only through degree $11$ and implies
\begin{equation}\label{eq:sym-chi}
 \chi^2(P_{\bar\Gamma_{t,v}},P_{\bar\Gamma_{t,w}})\le C_Rt^4.
\end{equation}
If $\mathbb Q_v$ is the law of all $n$ quadratic sketches, KL tensorization,
\eqref{eq:quad-sym-law}, $\mathrm{KL}\le\chi^2$, and data processing for
$\chi^2$ give
\begin{align}\label{eq:quad-kl}
 \mathrm{KL}(\mathbb Q_v\|\mathbb Q_w)
 &=n\,\mathrm{KL}\bigl(T\#P_{\bar\Gamma_{t,v}}
                 \|T\#P_{\bar\Gamma_{t,w}}\bigr)\notag\\
 &\le n\,\chi^2\bigl(T\#P_{\bar\Gamma_{t,v}}
                 \|T\#P_{\bar\Gamma_{t,w}}\bigr)\notag\\
 &\le n\,\chi^2\bigl(P_{\bar\Gamma_{t,v}}
                 \|P_{\bar\Gamma_{t,w}}\bigr)
 \le C_Rnt^4.
\end{align}

In the original, unsymmetrized experiment, the contraction in
\eqref{eq:diagnostic-moments} gives
\[
 \fnorm{M_3(\Gamma_{t,v})-M_3(\Gamma_{t,w})}
 \ge\frac{2a^2t}{3}\norm{v-w}.
\]
The Hellinger lower bound in the same moment characterization, now with three
atoms, yields
\begin{equation}\label{eq:target-separation}
 H(P_{\Gamma_{t,v}},P_{\Gamma_{t,w}})
 \ge c_Rt\norm{v-w}\ge c_Rt.
\end{equation}

Take $t=\alpha_R(d/n)^{1/4}$, with $\alpha_R$ sufficiently small that
$t\le t_0$ and $C_Rnt^4\le(\log M)/8$.  Fano's inequality and
\eqref{eq:quad-kl} give a constant lower bound on the probability of failing
to identify the packing index.  For completeness, given an arbitrary density
output $\widehat P$, define the nearest-neighbour decoder
\[
 \widehat J\in\arg\min_{1\le j\le M}H(\widehat P,P_{\Gamma_{t,v_j}}),
\]
with deterministic tie-breaking.  If the true index is $J$ and
\[
 H(\widehat P,P_{\Gamma_{t,v_J}})
 <\frac12\min_{i\ne J}
 H(P_{\Gamma_{t,v_i}},P_{\Gamma_{t,v_J}}),
\]
the triangle inequality forces $\widehat J=J$.  Thus the Fano lower bound for
$\Pp\{\widehat J\ne J\}$ and \eqref{eq:target-separation} imply
\eqref{eq:quadratic-lb} for every density-valued estimator, not only estimators
whose output belongs to the packing family.

Finally, for every $\ell\le5$, direct expansion of
\eqref{eq:local-cap-family} gives
$\fnorm{M_\ell(\Gamma_{t,v})-M_\ell(\Gamma_0)}\le C_{\ell,R}t$ uniformly in
$v$.  The Hellinger upper bound in Theorem~4.2 of \citet{DWYZ23}, now with
atom parameter $3$, proves \eqref{eq:local-cap-radius}.
\end{proof}

Over the unrestricted three-atomic class, a stronger constant lower bound
follows immediately from the exact sign aliasing between
$\Gamma_{t_0,v}$ and $\Gamma_{t_0,-v}$.  The proposition is deliberately
stated on the sign-oriented local cap to remove that artifact and isolate the
$t^2$-versus-$t$ mechanism responsible for the fourth root.  It is not a lower
bound for the full algorithm of \citet{DWYZ23}, which uses the original signed
observations after constructing its projection; it diagnoses only procedures
whose retained information is measurable with respect to the quadratic
sketches.

\section{Relation to recent algorithms}

The result targets a sharper conjunction of properties than generic PAC
learnability: fixed $k$, arbitrary ambient dimension, no separation or
minimum-weight condition, proper output, polynomial time, and the exact
$n\asymp d/\eps^2$ density scale.

\citet[Theorem~1.2; Theorem~4.16 in arXiv v2]{DK24} give a polynomial-time
recursive pseudo-projection and
implicit high-order moment method for bounded spherical Gaussian mixtures, but
their density hypothesis is an improper compressed polynomial/sampling oracle
and their theorem states a generic polynomial rather than the sharp
$d/\eps^2$ sample bound.  Their work is the closest conceptual predecessor:
the present first-order-fiber/second-order-residual lemma sharpens this implicit
moment-subspace paradigm rather than introducing tensor-free moment computation
de novo.  \citet[Theorems~5 and~7]{FL23} give efficient sparse-moment recovery
without separation and a known-common-covariance Gaussian-mixture guarantee in
transportation distance.  The moment-to-parameter conversion is not the sharp
uniform Hellinger statement here; for $d>k$, their Gaussian-mixture application
invokes the dimension-reduction step of \citet{DWYZ23}.
\citet[Theorem~1.4 and Corollary~8.1]{Bakshi22} treat the more general
problem of robustly learning mixtures with arbitrary component covariances.
They use high-order Hermite-tensor flattenings to construct low-dimensional
parameter subspaces and randomized contractions in their covariance-recovery
routine.  Their separation-free proper learner pays
$d^{O(k)}\operatorname{poly}_k(1/\eps)$ rather than the sharp $d/\eps^2$
sample dependence in the spherical location setting studied here.  Thus
tensor-flattening and contraction
primitives themselves are not new.  Among polynomial-time high-dimensional
algorithms, the remaining no-separation results either return an improper
sampler or density hypothesis with non-sharp sample dependence---for example,
\citet{CKS25} use
$d^{\operatorname{poly}(k/\eps)}$ samples---while parameter-recovery guarantees
generally require separation or lower-weight assumptions.  None of these
results directly implies
\cref{thm:main}.

Relative to Bakshi et al., we avoid estimating a full ambient high-order
tensor.  Relative to Diakonikolas--Kane, we directly estimate a fixed family
of restricted one-free-index fibers as $d$-vectors and complete a subspace of
$\R^d$, rather than recursively constructing tensor-power pseudo-projections
and an improper compressed density oracle.  The fibers enlarge a coarse range
until every remaining tensor residual is quadratic in the missed component.

\renewcommand{\printhistory}{}
\end{document}